\newtheorem{theorem}{Theorem}[section]
\newtheorem{definition}{Definition}[section]
\newtheorem{corollary}{Corollary}[section]
\newtheorem{remark}{Remark}[section]
\numberwithin{equation}{section}
\begin{document}

\title[Puiseux  asymptotic expansion for  transport problem in thin networks]
{Puiseux  asymptotic expansions for convection-dominated transport problems in thin graph-like networks:\\
 strong boundary interactions}
\author[Taras Mel'nyk \& Christian Rohde]{ Taras Mel'nyk$^{\natural, \flat}$ \ \& \ Christian Rohde$^\natural$}
\address{\hskip-12pt
$^\natural$ Institute of Applied Analysis and Numerical Simulation,
Faculty of Mathematics and Physics,  University of Stuttgart\\
Pfaffenwaldring 57,\ 70569 Stuttgart,  \ Germany
 \newline
$^\flat$ Department of Mathematical Physics, Faculty of Mathematics and Mechanics\\
Taras Shevchenko National University of Kyiv\\
Volodymyrska str. 64,\ 01601 Kyiv,  \ Ukraine\\
}
\email{Taras.Melnyk@mathematik.uni-stuttgart.de}

\address{\hskip-12pt  $^\natural$ Institute of Applied Analysis and Numerical Simulation,
Faculty of Mathematics and Physics, University of Stuttgart\\
Pfaffenwaldring 57,\ 70569 Stuttgart,  \ Germany
}
\email{christian.rohde@mathematik.uni-stuttgart.de }

\begin{abstract} \vskip-10pt
This article completes the study of the influence of the intensity parameter $\alpha$ in the boundary condition
$
\varepsilon  \partial_{\boldsymbol{\nu}_\varepsilon} u_\varepsilon  -  u_\varepsilon \, \overrightarrow{V_\varepsilon}\boldsymbol{\cdot}\boldsymbol{\nu}_\varepsilon   =  \varepsilon^{\alpha} \varphi_\varepsilon
$
given on the boundary of a  thin three-dimensional graph-like network consisting of thin cylinders that are  interconnected by small domains (nodes) with diameters of order  $\mathcal{O}(\varepsilon).$ Inside of the thin network a time-dependent  convection-diffusion equation with high P\'eclet number  of  order $\mathcal{O}(\varepsilon^{-1})$ is considered. The novelty of this article is the case of $\alpha <1,$ which indicates a strong intensity of physical processes on the boundary, described by the inhomogeneity $\varphi_\varepsilon$ (the cases $\alpha =1$ and $\alpha >1$ were previously studied by the same authors).

A complete Puiseux asymptotic expansion is constructed for the solution $u_\varepsilon$ as $\varepsilon \to 0,$ i.e., when the diffusion coefficients are eliminated and the thin network shrinks  into a graph. Furthermore, the corresponding uniform pointwise and energy estimates are proved, which provide an approximation of the solution with a given accuracy in terms  of the parameter $\varepsilon.$
\end{abstract}

\keywords{Asymptotic expansion, convection-diffusion problem, boundary interactions, thin graph-like network, hyperbolic limit model
\\
\hspace*{9pt} {\it MOS subject classification:} \   35K20,  35R02,  35B40, 35B25, 	35B45, 35K57, 35Q49
}

\maketitle
\tableofcontents
\section{Introduction}\label{Sect1}

In this paper we continue our study \cite{Mel-Roh_preprint-2022,Mel-Roh_preprint-2023} of parabolic  
 transport problems for some species' concentration  in graph-like networks. For  $\varepsilon$ being a small parameter, these consist of thin cylinders that are  interconnected by small domains (nodes)   of order  $\mathcal{O}(\varepsilon)$ in diameter, see Fig.~\ref{f1}. The parameter  $\varepsilon$ is not only characterising the geometry of the thin network but also the strength of certain physical processes. First,  we consider  a high P\'eclet number regime of  order $\mathcal{O}(\varepsilon^{-1})$, i.e. a high ratio of  convective to  diffusive transport rates. This  assumption   leads to  a re-scaled  diffusion operator in the  parabolic differential equation
\begin{equation}\label{intr.1}
  \partial_t u_\varepsilon -  \varepsilon\, \Delta_x u_\varepsilon + \mathrm{div}_x \big( \overrightarrow{V_\varepsilon} \, u_\varepsilon\big)
   =  0,
\end{equation}
governing the unknown species concentration $u_\varepsilon$. In \eqref{intr.1}, $\overrightarrow{V_\varepsilon}$ is
 a  given convective vector field with small transversal velocity components.
In order to study additionally the influence of  external influences via boundary interactions, in the paper \cite{Mel-Roh_preprint-2023}  an additional parameter $\alpha$ was introduced  in the  inhomgeneous  boundary conditions
$$
\varepsilon  \partial_{\boldsymbol{\nu}_\varepsilon} u_\varepsilon  -  u_\varepsilon \, \overrightarrow{V_\varepsilon}\boldsymbol{\cdot}\boldsymbol{\nu}_\varepsilon   =  \varepsilon^{\alpha} \varphi^{(i)}_\varepsilon, \quad i \in \{0, 1, \ldots, \mathcal{M}\},
$$
which hold both on the lateral surfaces of thin cylinders and on the boundaries of nodes forming the  thin network.
Here,  a given function $\varphi^{(i)}$ describes physical processes on the surface of the $i$th component of the network.

Three qualitatively different cases can be identified  for the  asymptotic behaviour (as $\varepsilon \to 0$)  of the solution~$u_\varepsilon$ depending on the value of  $\alpha$, namely
\begin{description}
  \item[$\alpha > 1$]  low intensity of boundary processes (if $\alpha \ge 2$ they can simply be ignored, if $\alpha \in (1, 2)$ their influence appears in the second-order terms of the asymptotics);
  \item[$\alpha =1$] moderate intensity, then the boundary processes directly affect the first terms ${\{{w}^{(i)}_0\}}_i$ of the asymptotics, which represent the solution of the limit problem consisting of first-order hyperbolic equations
   \begin{equation}\label{intr_1}
     \partial_t{w}^{(i)}_0(x_i,t) \, + \, \partial_{x_i}\big( v_i^{(i)}(x_i,t)\, w^{(i)}_0(x_i,t) \big)
  =  - \widehat{\varphi}^{(i)},
   \end{equation}
 each of which is defined on the $i$-th edge of the corresponding metric graph, to which the thin network shrinks
(here $-\widehat{\varphi}^{(i)}$ is the limit transformation of the boundary interaction $\varphi^{(i)}$ into the right-hand side of the differential equation);
    \item [$\alpha <1$] strong intensity (it was only noted in the conclusion of  \cite{Mel-Roh_preprint-2023} that the question of finding the asymptotics remains open and one should expect that the solution will be unbounded  as  $\varepsilon\to 0$).
\end{description}
In this paper we present a complete analysis of the latter, qualitatively different case whereas the first two ones have been analyzed in
\cite{Mel-Roh_preprint-2023}.
\begin{figure}[htbp]
\centering
\includegraphics[width=8cm]{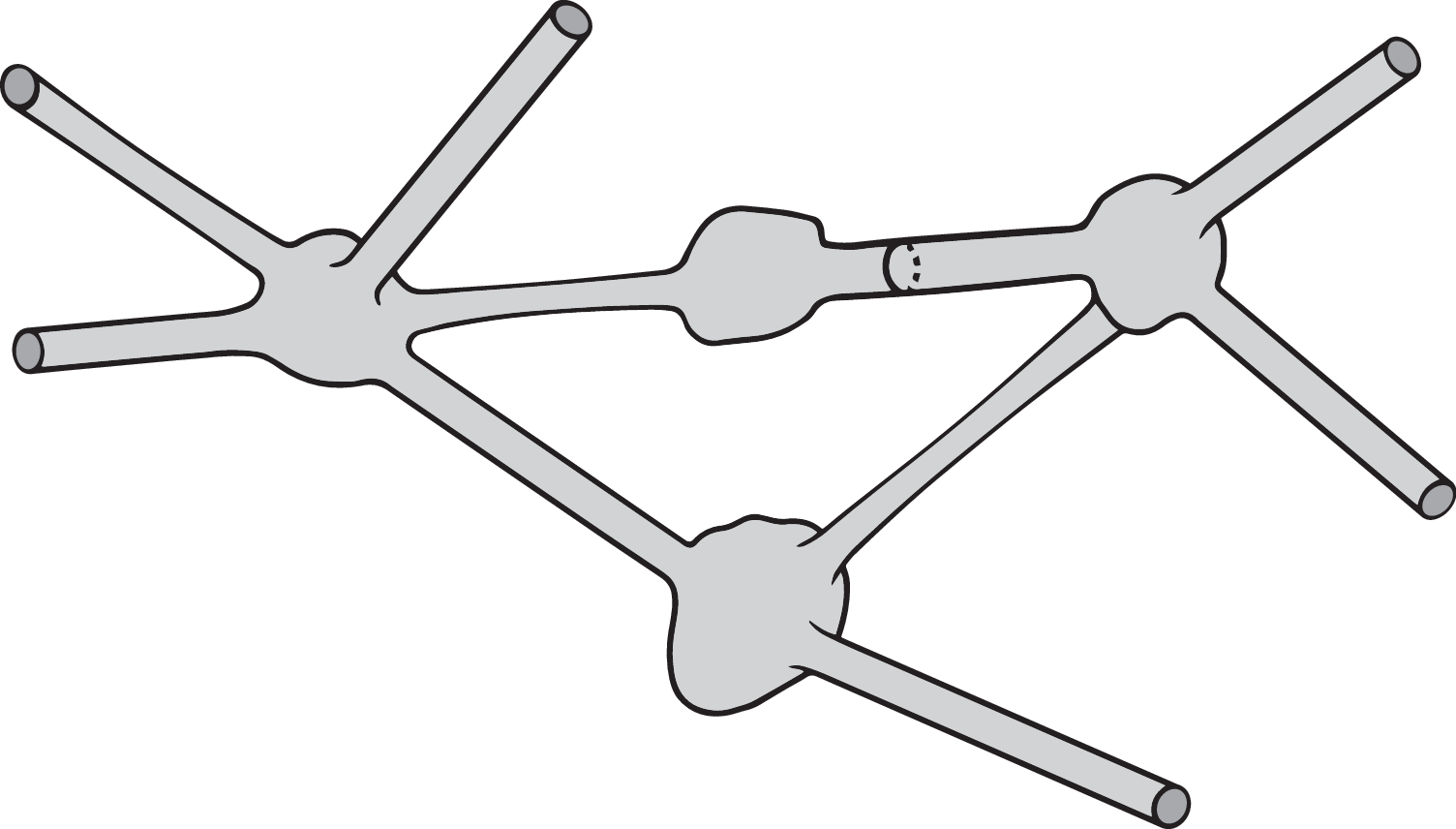}
\caption{Network of thin cylinders connected by  nodes of arbitrary geometry.}\label{f1}
\end{figure}

Studying the influence of inhomogeneous Neumann or Fourier boundary conditions on the processes in the entire domain is a very timely task, especially in regions with complex structures,  e.g., see  papers \cite{Ang_2020,Cio-Dam-Don-Gri-Zak2012,Con-Don_1988,Conca,Conca_1,Deu-Hoch_2004,Don-Gui-Oro_2018,Gom-Lob-Per-San_2018,Gom-Lob-Per-Pod-Sha_2018,
Hor-Jag_1991,Mel-Siv_2011} for  perforated domains,  \cite{CarPanSir,Mel-Kle_2019,Mel_IJB-2019} for thin domains,   \cite{Che-Fri-Pia_1999,Mel_1991,Mel-Pop_2009} for domains with rapidly oscillating boundaries,  \cite{Gau_1994,Gau-Mel_2019,Mel-Sad_2019} for thick junctions or domains with a highly oscillating boundary.
The problems considered in these works are mathematical models of various reaction-diffusion, adsorption and transport phenomena in hydrogeology, chemistry, biology, medicine, and thermal conductivity.

In the vast majority of these works, the intensity of the processes at the boundaries is not high, which  leads to the appearance of an additional term in the corresponding limit differential equation (it depends on the ratio between the total size of the surfaces and the intensity of the processes on these surfaces).
In \cite{Gau_1994} the author gave an example of a boundary-value problem with a large boundary interaction in a domain with a highly oscillating boundary and proved that the solution becomes unbounded as the oscillation parameter tends to zero.
 Additional assumptions that guarantee a priori estimates for the solution regardless of the small parameter can help to prove the convergence theorem even for large boundary interactions (see, e.g.,  \cite{Gau-Mel_2019}).
However, in the general case,  physical processes at the boundary of regions with a complex structure can provoke cardinal changes in the global phenomenon throughout the region.

The principal novelty of the present paper is the study of the impact of strong boundary interactions (the case $\alpha < 1$ and
without any additional assumptions guaranteeing a priori estimates) on the asymptotic behaviour (as $\varepsilon \to 0$) of the solution to the  parabolic convection-dominated transport problem in a thin graph-like network.

To do this, we use the asymptotic expansion method, which is a very powerful tool for studying various perturbed problems.
Usually the corresponding first term in the expansion gives the basic limiting behavior for the solution of interest; the subsequent terms allow to estimate the sensitivity of the model  to smaller scale  perturbations. Therefore, convergence results used in studies of perturbed models may not be acceptable in cases where the values of the perturbation parameters are not small enough.
Asymptotic expansions allow the influence of other features of the model to be taken into account through higher-order terms in the expansions, and improve the accuracy of the corresponding numerical procedures by combining them with asymptotic results.
It should be noted that in asymptotic analysis it is very important to guess the form of the asymptotic ansatz,  which is affected by various parameters of the problem (see. e.g., \cite{Mel-Siv_2011,M-AA-2021}).

In the present work we compose effective recurrent algorithms for constructing asymptotic Puiseux  expansions. Such  series are a generalization of power series that allow negative and fractional exponents of the indeterminate; in our case, these are real exponents of the parameter $\varepsilon$, which depend on the parameter~$\alpha$. We  show that the  principal part of the  Puiseux series
(it  consists of unbounded coefficients as $\varepsilon \to 0$;  see Definition~\ref{Puiseux}), which determines the main behaviour of the solution in the $i\!$th thin cylinder of the network, is  as follows
\begin{equation}\label{main_part}
\varepsilon^{\alpha -1} \, w_{\alpha -1}^{(i)} (x_i,t)  + \varepsilon^{\alpha} \, w_{\alpha}^{(i)} (x_i,t)
  +
  \sum\limits_{k=2}^{-\lfloor\alpha\rfloor}  \varepsilon^{\alpha +k -1} \, \Big(w_{\alpha +k -1}^{(i)} (x_i) + u_{\alpha +k -1}^{(i)} \big( x_i, \tfrac{\overline{x}_i}{\varepsilon}, t\big)\Big).
\end{equation}
Note that, for  $\alpha \in [0,1)$, there is only one term $\varepsilon^{\alpha -1} \, w_{\alpha -1}^{(i)}$ in the principal part. The coefficients ${\{w_{\alpha -1}^{(i)}\}}_i$ form the solution to the corresponding hyperbolic transport problem on the graph,  the right-hand sides  of which are the averaged characteristics of  the boundary interactions (see \eqref{limit_al-1}). The coefficients ${\{w_{\alpha}^{(i)}\}}_i$ represent  the solution to the hyperbolic transport problem \eqref{limit_al},  and they take  into account interactions on the node boundary, physical processes inside the node and geometric characteristics of the network  through the special gluing condition
 \begin{equation}\label{gluing}
 \sum_{i}  \mathrm{v}_i  \, h_i^2\, w_{\alpha}^{(i)} (0,t) = \boldsymbol{d_{\alpha}}(t)
 \end{equation}
  at the graph vertex, where $\boldsymbol{d_{\alpha}}(t)$ is determined in \eqref{const_d_alpfa}.

It is worth noting another interesting feature in the study of boundary-value problems with a predominance of convection in thin networks, namely that the corresponding first-order hyperbolic problems for the coefficients of the asymptotic expansion have only a Kirchhoff-type  transmission condition (similar to \eqref{gluing}) at the graph vertices; no other condition, commonly called the continuity condition.
This means that such solutions, generally speaking, are not continuous at the vertices, which is explained by the wave-particle duality of first-order hyperbolic differential equations (for more details see the conclusion section of our paper \cite{Mel-Roh_preprint-2022}).
To neutralize this duality and to ensure continuity of the approximation in the vicinities of nodes, a special node-layer part of the asymptotics is introduced whose coefficients are special solutions (with polynomial growth at infinity) to boundary value problems in an unbounded domain with different outlets at infinity (see  \eqref{N_p_alpha +k_prob}).  Writing down the solvability conditions for such problems, Kirchhoff-type  conditions are derived at the vertex of the graph for terms of the regular expansion.

For the sake of clarity of presentation and to focus more on the study of the effects of boundary processes, we consider a simple model of a thin graph-like network consisting of three thin cylinders located along the coordinate axes, respectively, and the Laplace equation for modelling diffusion processes. A more general diffusion operator $\varepsilon\, \mathrm{div}_x\big( \mathbb{D}(\frac{x}{\varepsilon}) \nabla_x u_\varepsilon\big)$  as well as thin junctions with curvilinear cylinders  were considered in our previous paper \cite{Mel-Roh_preprint-2022}, and general thin networks with different convective vector field dynamics   in \cite{Mel-Roh_preprint-2023}.

The  paper is structured as follows. In Section~\ref{Sec:Statement}, we describe a model thin graph-like junction, make assumptions for a given convection vector field and for boundary interactions, and formulate a problem. Section~\ref{Sec_Linear} is devoted to the construction of  a formal asymptotic expansion of the solution to the problem~\eqref{probl}. The asymptotic expansion consists of three parts: the regular part of the asymptotics located inside each thin cylinder,  the boundary-layer part located near the bases of some thin cylinders, and the node-layer part located in the vicinity of the node. Here we prove the solvability of all interrelated recurrent procedures that determine
 coefficients of these parts of the asymptotics. A complete asymptotic expansion $\mathfrak{U}^{(\varepsilon)}$ in the whole thin graph-like junction is constructed in Section~\ref{justification}, where we also calculate residuals that  its partial sum $\mathfrak{U}_{M}^{(\varepsilon)}$ leaves in the problem \eqref{probl}.  Our main result is Theorem \ref{Th_1} that justifies the constructed asymptotic expansion and provides both the asymptotic  uniform pointwise  and energy estimates  for the difference between the solution $u_\varepsilon$ and the partial sum for any $M \in \Bbb N$ and $M >  \frac{3}{2}(1 - \lfloor \alpha\rfloor).$  The article ends with a section of conclusions and remarks.

\section{Problem statement}\label{Sec:Statement}
For a small positive parameter $\varepsilon,$  a model thin graph-like junction  $\Omega_\varepsilon$  consists of three thin  cylinders
$$
\Omega_\varepsilon^{(i)} =
  \Bigg\{
  x=(x_1, x_2, x_3)\in\Bbb{R}^3 \colon \
  \varepsilon \,  \ell_0  <x_i<\ell_i, \quad
  \sum\limits_{j=1}^3 (1-\delta_{i,j})x_j^2<\varepsilon^2 h_i^2
  \Bigg\}, \quad i=1,2,3,
$$
that are connected with a domain $\Omega_\varepsilon^{(0)}$ (referred to  as the "node"). Here  $\ell_0\in(0, \frac13),$ $ \ell_i\geq1,$ $h_i>0$ are given numbers,  \  $\delta_{i,j}$ denotes  the Kronecker delta, i.e.,
$\delta_{ii} = 1$ and $\delta_{ij} = 0$ if $i \neq j.$

We denote the lateral surface of the thin cylinder $\Omega_\varepsilon^{(i)}$ by
$$
{\Gamma_\varepsilon^{(i)}} := \partial\Omega_\varepsilon^{(i)} \cap \{ x\in\Bbb{R}^3 \ : \ \varepsilon \ell_0<x_i<\ell_i \}
$$
and by
$$
\Upsilon_\varepsilon^{(i)} (\mu) := \Omega_\varepsilon^{(i)} \cap
\big\{  x\in\Bbb{R}^3 \, : \ x_i= \mu\big\}
$$
its cross-section at the point $\mu \in [ \varepsilon \ell_0, \ell_i].$
\begin{figure}[htbp]
\begin{center}
\includegraphics[width=5cm]{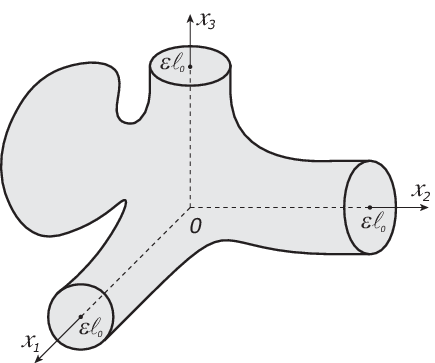}
\end{center}
\caption{The node $\Omega_\varepsilon^{(0)}$}\label{f2}
\end{figure}

The node $\Omega_\varepsilon^{(0)}$ (see Fig.~\ref{f2}) is formed by the homothetic transformation with coefficient $\varepsilon$ from a bounded domain $\Xi^{(0)}\subset \Bbb R^3$ containing the origin,  i.e.,
$
\Omega_\varepsilon^{(0)} = \varepsilon\, \Xi^{(0)}.
$
In addition, we assume that the boundary $\partial  \Xi^{(0)}$ of $\Xi^{(0)}$ contains the disks
$\Upsilon^{(i)}_1(\ell_0) := \overline{\Xi^{(0)}} \cap
\big\{ x\colon x_i= \ell_0\big\}, \   i\in\{1,2,3\},$ that are the bases of some right cylinders, respectively, and the lateral surfaces of these cylinders belong to $\partial  \Xi^{(0)}.$ So, the boundary of the node $\Omega_\varepsilon^{(0)}$ consists of
the disks $\Upsilon_\varepsilon^{(i)} (\varepsilon\ell_0), \   i\in\{1,2,3\},$
and the surface
$$
\Gamma_\varepsilon^{(0)} :=
\partial\Omega_\varepsilon^{(0)} \backslash
\left\{
 \overline{\Upsilon_\varepsilon^{(1)} (\varepsilon \ell_0)} \, \cup \,
 \overline{\Upsilon_\varepsilon^{(2)} (\varepsilon \ell_0)} \, \cup \,
 \overline{\Upsilon_\varepsilon^{(3)} (\varepsilon \ell_0)}
\right\}.
$$

Thus, $\Omega_\varepsilon$  is   the interior of the union
$
\bigcup_{i=0}^{3}\overline{\Omega_\varepsilon^{(i)}}
$
(see Fig.~\ref{f3}), and we assume that the surface $\partial\Omega_\varepsilon\setminus \bigcup_{i=0}^{3}\overline{\Upsilon_\varepsilon^{(i)} (\ell_i)}$ is smooth of the class $C^3.$

\begin{figure}[htbp]
\begin{center}
\includegraphics[width=8cm]{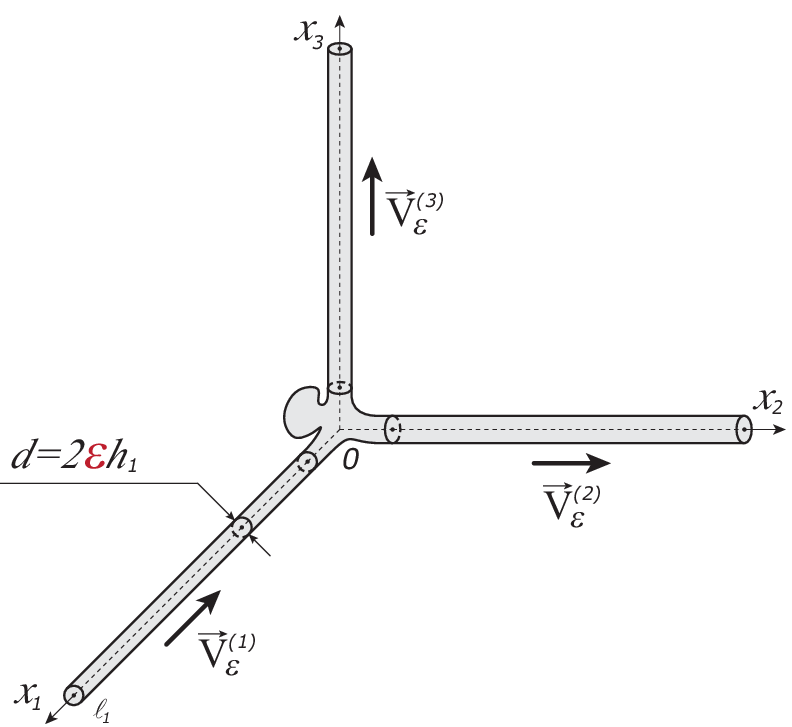}
\end{center}
\caption{The model thin graph-like  junction $\Omega_\varepsilon$}\label{f3}
\end{figure}

The given vector-valued function $\overrightarrow{V_\varepsilon}$ depends on the parts of $\Omega_\varepsilon$ and has the following structure:
\begin{gather}
\overrightarrow{V_\varepsilon}(x)=
\left(v^{(0)}_1(\tfrac{x}{\varepsilon}),  \  v^{(0)}_2(\tfrac{x}{\varepsilon}), \  v^{(0)}_3(\tfrac{x}{\varepsilon})
\right) =: \overrightarrow{V_\varepsilon}^{(0)}(x), \qquad x \in \ \Omega_\varepsilon^{(0)}, \notag
   \\
   \label{str_1}
\overrightarrow{V_\varepsilon}(x)=
\left(v^{(1)}_1(x_1), \ \varepsilon\,  v^{(1)}_2(x_1, \tfrac{\overline{x}_1}{\varepsilon}), \
 \varepsilon \, v^{(1)}_3(x_1,\tfrac{\overline{x}_1}{\varepsilon})
\right) =: \overrightarrow{V_\varepsilon}^{(1)}(x), \qquad x \in \ \Omega_\varepsilon^{(1)} \quad (v^{(1)}_1 < 0),
    \\
    \label{str_2}
\overrightarrow{V_\varepsilon}(x) =
\left( \varepsilon\,  v^{(2)}_1(x_2, \tfrac{\overline{x}_2}{\varepsilon}), \   v^{(2)}_2(x_2),  \
 \varepsilon \, v^{(2)}_3(x_2,\tfrac{\overline{x}_2}{\varepsilon})
\right) =: \overrightarrow{V_\varepsilon}^{(2)}(x), \qquad x \in \ \Omega_\varepsilon^{(2)} \quad (v^{(2)}_2 >  0),
     \\
     \label{st_3}
\overrightarrow{V_\varepsilon}(x) =
\left(\varepsilon\,  v^{(3)}_1(x_3, \tfrac{\overline{x}_3}{\varepsilon}), \
    \varepsilon \, v^{(3)}_2(x_3,\tfrac{\overline{x}_3}{\varepsilon}), \ v^{(3)}_3(x_3)
\right) =: \overrightarrow{V_\varepsilon}^{(3)}(x), \qquad x \in \ \Omega_\varepsilon^{(3)} \quad (v^{(3)}_3 >  0).
  \end{gather}
where
$$
\overline{x}_i =
\left\{\begin{array}{lr}
(x_2, x_3), & \text{if} \ \ i=1, \\
(x_1, x_3), & \text{if} \ \ i=2, \\
(x_1, x_2), & \text{if} \ \  i=3.
\end{array}\right.
$$
For a fixed value of the index $i\in \{1, 2, 3\}$, the function $v^{(i)}_i$ belongs to the space $C^3\big([0,\ell_i]\big)$ and is equal to a constant  $\mathrm{v}_i$ in a neighbourhood  of the origin; the other components of  $\overrightarrow{V_1}^{(i)}$ are smooth of class $C^3$ in $\overline{\Omega}_1^{(i)}$ and have  compact supports with respect to the  longitudinal  variable $x_i,$
 in particular, we will assume that they vanish in  $[0, \delta_i],$  $(\delta_i~>~0).$
Hence, the main direction of the vector field $\overrightarrow{V_\varepsilon}^{(i)}$ is oriented  along the axis of the cylinder $\Omega_\varepsilon^{(i)}$ (see Fig.~\ref{f3}).

To describe the dynamics of the velocity field $\overrightarrow{V_\varepsilon}$ in the node $\Omega_\varepsilon^{(0)},$ we make the following assumptions. The velocity field is conservative  in the node and
its potential $p$ is a solution to the boundary-value problem
\begin{equation}\label{potential0}
\left\{\begin{array}{rcll}
    \Delta_\xi p(\xi)  & = & 0, & \quad
    \xi \in \Xi^{(0)},
\\[2mm]
\dfrac{\partial p(\xi)}{\partial \xi_i}  &=&  \mathrm{v}_i, & \quad
   \xi \in \Upsilon^{(i)}_1(\ell_0), \quad i\in \{1, 2, 3\},
\\[4mm]
\dfrac{\partial p(\xi)}{\partial \boldsymbol{\nu}}  &=&  0, & \quad
   \xi \in \Gamma^{(0)} := \partial{\Xi^{(0)}} \setminus \Big( \bigcup_{i=1}^3 \Upsilon^{(i)}_1(\ell_0)\Big); \qquad \int_{\Xi^{(0)}} p(\xi)\, d\xi =0.
\end{array}\right.
\end{equation}
Here $\Delta_\xi$ is  the Laplace operator in the variables $\xi=(\xi_1,\xi_2,\xi_3),$  $\frac{\partial p}{\partial \boldsymbol{\nu}}$ is the derivative along the  outward unit normal $\boldsymbol{\nu}$ to $\partial \Xi^{(0)}.$  The Neumann problem \eqref{potential0} has a unique  solution if and only if the  conservation condition
\begin{equation}\label{cond_1}
  \sum_{i=1}^{3} h_i^2(0) \, \mathrm{v}_i =0
\end{equation}
holds. Thus,
\begin{equation}\label{field_0}
  \overrightarrow{V_\varepsilon}^{(0)}(x)  = \nabla_{\xi} p(\xi)\big|_{\xi = \frac{x}{\varepsilon}} = \varepsilon \nabla_x \big( p(\tfrac{x}{\varepsilon}) \big), \quad x\in \Omega^{(0)}_\varepsilon,
\end{equation}
and, clearly,  $\mathrm{div}_x \overrightarrow{V_\varepsilon}^{(0)} = 0$ in $\Omega^{(0)}_\varepsilon,$ i.e.,
 $\overrightarrow{V_\varepsilon}^{(0)}$  is incompressible in $\Omega^{(0)}_\varepsilon.$

In $\Omega_\varepsilon,$ we consider the following parabolic convection-diffusion  problem:
\begin{equation}\label{probl}
\left\{\begin{array}{rcll}
 \partial_t u_\varepsilon -  \varepsilon\, \Delta_x u_\varepsilon +
  \mathrm{div}_x \big( \overrightarrow{V_\varepsilon} \, u_\varepsilon\big)
  & = & 0, &
    \text{in} \ \Omega_\varepsilon \times (0, T),
\\[2mm]
-  \varepsilon \,  \partial_{\boldsymbol{\nu}_\varepsilon} u_\varepsilon +  u_\varepsilon \, \overrightarrow{V_\varepsilon}\boldsymbol{\cdot}\boldsymbol{\nu}_\varepsilon &  = & \varepsilon^{\alpha}  \varphi^{(i)}_\varepsilon &
   \text{on} \ \Gamma^{(i)}_\varepsilon \times (0, T), \quad i \in \{0, 1,2,3\},
   \\[2mm]
 u_\varepsilon \big|_{x \in \Upsilon_{\varepsilon}^{(i)} (\ell_i)}
 & = & q_i(t), &  t\in (0, T), \quad i \in \{1,2,3\},
\\[2mm]
u_\varepsilon\big|_{t=0}&=& 0, & \text{in} \ \Omega_{\varepsilon},
 \end{array}\right.
\end{equation}
where  ${\boldsymbol{\nu}}_\varepsilon$ is  the outward unit normal to $\partial \Omega_\varepsilon,$
$\partial_{\boldsymbol{\nu}_\varepsilon}$ denotes  the derivative along ${\boldsymbol{\nu}}_\varepsilon$,  the  intensity parameter $\alpha$  is less then $1.$ Given functions $\{\varphi^{(i)}_\varepsilon\}_{i=0}^3$ are determined as follows
$$
\varphi^{(0)}_\varepsilon(x,t) := \varphi^{(0)}\big(\tfrac{x}{\varepsilon},t\big), \quad (x,t) \in \overline{\Gamma_\varepsilon^{(0)}} \times [0,T],
$$
$$
 \varphi^{(i)}_\varepsilon(x,t) := \varphi^{(i)}\big(\tfrac{\overline{x}_i}{\varepsilon}, x_i, t\big), \quad (x,t) \in \overline{\Gamma_\varepsilon^{(i)}} \times [0,T], \quad i\in \{1,2,3\},
$$
where  the function $\varphi^{(0)}(\xi,t), \ (\xi,t) \in \overline{\Xi^{(0)}}\times [0,T],$  and the functions
$$
\varphi^{(i)}(\overline{\xi}_i,x_i,t), \ \ (\overline{\xi}_i,x_i,t) \in \big\{ |\overline{\xi}_i|\le h_i, \ x_i\in [0, \ell_i], \ t\in [0,T]\big\}, \quad  i\in\{1,2,3\},
$$
belong to the class $C^2$ in their domains of definition. In addition,   $\varphi^{(0)}$ vanishes uniformly with respect to $t\in [0, T]$ in neighborhoods of
$\Upsilon^{(i)}_1(\ell_0), \ i\in \{1,2,3\},$ the functions $\{\varphi^{(i)}\}_{i=1}^3$ vanish  uniformly with respect to $t$  and $\overline{\xi}_i$ in neighborhoods of the ends  of the corresponding closed interval  $[0, \ell_i].$

Given functions $\{q_i(t), \ t\in [0, T]\}_{i=1}^3 $ are smooth and nonnegative. To satisfy the zero- and first-order matching conditions for the problem \eqref{probl},  we assume that
\begin{equation}\label{match_conditions}
  q_i(0) =  \frac{d q_i}{dt}(0)=0, \quad i\in \{1,2,3\}, \quad \varphi^{(i)}\big|_{t=0} =0, \quad i\in\{0, 1,2,3\}.
\end{equation}

Thus, due to the classical theory of parabolic initial-boundary problem there exists  a unique classical solution to the problem \eqref{probl}  (see e.g. \cite[Chapt. IV, \S 5]{Lad_Sol_Ura_1968}). Obviously, it is also a weak solution in the Sobolev space $L^2\big(0,T; H^1(\Omega_\varepsilon)\big).$

Our goal is to construct the complete asymptotic expansion for  the solution to the problem \eqref{probl} as
$\varepsilon \to 0,$ i.e., when the thin junction $\Omega_\varepsilon$ is shrunk into the graph
$$
 \mathcal{I} := I_1 \cup I_2 \cup I_3,
$$
 where $I_i := \{x\colon  x_i \in  [0, \ell_i], \ \overline{x}_i  = (0, 0)\},$ $i\in \{1,2,3\}.$

\section{Formal asymptotics}\label{Sec_Linear}

To approximate the solution $u^\varepsilon$, the following Puiseux series approaches are suggested:
\begin{itemize}
  \item
   \begin{equation}\label{regul+alfa}
 \sum\limits_{k=0}^{+\infty} \sum\limits_{p=0}^{1}  \varepsilon^{p\alpha +k -1 } \, \left(w_{p\alpha +k -1}^{(i)} (x_i) + u_{p\alpha +k -1}^{(i)} \Big( x_i, \frac{\overline{x}_i}{\varepsilon}, t \Big) \right)
\end{equation}
for the regular part of the asymptotics in  each thin cylinder $\Omega^{(i)}_\varepsilon$ $(i\in \{1,2,3\});$
 \item
  \begin{equation}\label{junc+alfa}
\sum\limits_{k=0}^{+\infty} \sum\limits_{p=0}^{1}  \varepsilon^{p\alpha +k -1} \, N_{p\alpha +k -1}\left(\frac{x}{\varepsilon}, t\right)
\end{equation}
for the node-layer part of the asymptotics in a neighborhood of the node $\Omega^{(0)}_\varepsilon;$
\item
\begin{equation}\label{exp-EVl+alfa}
  \sum\limits_{k=0}^{+\infty} \sum\limits_{p=0}^{1}  \varepsilon^{p\alpha +k -1} \, \Pi^{(i)}_{p\alpha +k -1}\Big( \frac{\ell_i - x_i}{\varepsilon}, \dfrac{\overline{x}_i}{\varepsilon}, t \Big)
\end{equation}
for the boundary-layer part of the asymptotics in a neighborhood of the base $\Upsilon_\varepsilon^{(i)}(\ell_i)$ $(i \in \{2, 3\}).$
\end{itemize}

\begin{remark}\label{r4*1} The ansatzes \eqref{regul+alfa} -- \eqref{exp-EVl+alfa} are suitable for constructing the asymptotics of the solution to the problem~\eqref{probl} for any real value of the parameter $\alpha.$
Each of these ansatzes  is the formal sum of two ansatzes, namely
\begin{equation}\label{ansatz-structure}
  \sum\limits_{k=0}^{+\infty} \varepsilon^{k-1} \, \mathfrak{B}_{k-1} \ + \ \sum\limits_{k=0}^{+\infty} \varepsilon^{\alpha + k -1} \, \mathfrak{B}_{\alpha + k -1}.
\end{equation}
In the case of $\alpha \in \Bbb Z$, these ansatzes are transformed into ansatzes in integer powers of $\varepsilon,$ which have the following form:
$$
\sum\limits_{k=0}^{+\infty} \varepsilon^k \, \mathfrak{B}_k \quad \text{for} \ \alpha \in \Bbb N, \quad \text{and} \quad  \sum\limits_{k=\alpha -1}^{+\infty} \varepsilon^k \, \mathfrak{B}_k \quad \text{for} \  \alpha \in \Bbb Z, \ \alpha \le  0.
$$

In this paper  we consider the more complicated case $\alpha \in \Bbb R \setminus \Bbb Z$ and $\alpha < 1$,  always assuming that coefficients with negative integer indices and coefficients with indices less than $\alpha -1$ vanish in all series and equations.
\end{remark}

For such series, we give the following definition of an asymptotic expansion.

\begin{definition}\label{Puiseux} Let $\mathds{B}$  be a Banach space,  $f(\varepsilon)$ be an element in  $\mathds{B}$,
which depends on a small parameter $\varepsilon,$ and  $\alpha \in \Bbb R \setminus \Bbb Z,  \ \alpha < 1$.   We say that
a Puiseux series
\begin{equation}\label{Pu_ser}
    \sum\limits_{k=0}^{+\infty} \sum\limits_{p=0}^{1}  \varepsilon^{p\alpha +k -1} \, \mathfrak{b}_{p\alpha +k -1}
\end{equation}
whose coefficients belong to $\mathds{B}$  is the asymptotic expansion of $f(\varepsilon)$ in the Banach space $\mathds{B}$  if for any
$\mathfrak{N} > 0$ there exists a positive integer  $M_0$ such that for all  $M \in \Bbb N$ and $M \ge M_0$
\begin{equation}\label{def_as_exp}
\left\| f(\varepsilon) -  \mathcal{L}_M(\varepsilon)\right\|_{\mathds{B}} = \mathcal{O}(\varepsilon^\mathfrak{N}) \quad \text{as} \ \
\varepsilon \to 0,
\end{equation}
where  $\mathcal{L}_M(\varepsilon)$ is the partial sum  of \eqref{Pu_ser}, which  is defined as follows
\begin{equation}\label{part-sum}
  \mathcal{L}_M(\varepsilon) = \sum\limits_{k=0}^{- \lfloor \alpha\rfloor}   \varepsilon^{\alpha +k -1} \, \mathfrak{b}_{\alpha +k -1}
 +  \sum\limits_{k= 1}^{M +\lfloor \alpha\rfloor}   \varepsilon^{k -1} \, \mathfrak{b}_{k -1}  + \sum\limits_{k=- \lfloor \alpha\rfloor + 1}^{M}   \varepsilon^{\alpha +k -1} \, \mathfrak{b}_{\alpha +k -1},
\end{equation}
and $\lfloor \cdot \rfloor$ is the floor function.
The first sum $\sum_{k=0}^{- \lfloor \alpha\rfloor}   \varepsilon^{\alpha +k -1} \, \mathfrak{b}_{\alpha +k -1}$
 is called the principal part of the  Puiseux series \eqref{Pu_ser}.
\end{definition}
\begin{remark}
  It can be seen from Definition~\ref{Puiseux} that the number of  terms in the principal part of the  Puiseux series \eqref{Pu_ser} depends on the parameter $\alpha$ and each term in the principal part is unbounded as $\varepsilon \to 0.$  For example, if $\alpha \in (0, 1),$ then the principal part contains only one term $ \varepsilon^{\alpha  -1} \, \mathfrak{b}_{\alpha  -1}.$
\end{remark}

Formally substituting \eqref{regul+alfa}   into the  differential equation of the problem~\eqref{probl} and into  the boundary condition on the lateral surface of the thin cylinder $\Omega_\varepsilon^{(i)}$ (the index $i$ is  fixed), collecting coefficients at the same power of $\varepsilon$,  we get the following relations for each $k \in \Bbb N_0\cup \{-1\}$ and $ p\in \{0, 1\}$:
\begin{align}\label{eq_2}
 \Delta_{\bar{\xi}_i}u^{(i)}_{p\alpha +k}\big(x_i, \bar{\xi}_i, t\big)
  = &  \ \Big(  v^{(i)}_i(x_i) \, \big[ w^{(i)}_{p\alpha +k-1}(x_i, t) + u^{(i)}_{p\alpha +k-1}(x_i,\bar{\xi}_i, t) \big] \Big)^\prime
  +  \partial_t {w}_{p\alpha +k-1}^{(i)}(x_i, t)   \notag
    \\
    + & \  \partial_t {u}_{p\alpha +k-1}^{(i)}(x_i,\bar{\xi}_i, t) + \mathrm{div}_{\bar{\xi}_i} \Big( \overline{V}^{(i)}(x_i, \bar{\xi}_i) \,
            \big[ w^{(i)}_{p\alpha +k-1}(x_i, t) + u^{(i)}_{p\alpha +k-1}(x_i,\bar{\xi}_i, t) \big]\Big)  \notag
            \\
             - & \ \Big( w^{(i)}_{p\alpha +k-2}(x_i,t)  + u^{(i)}_{p\alpha +k-2}(x_i,\bar{\xi}_i, t) \Big)^{\prime\prime},
\quad  \bar{\xi}_i \in \Upsilon_i,
\end{align}
where  $\bar{\xi}_i=\frac{\overline{x}_i}{\varepsilon},$  $\Upsilon_i := \big\{ \overline{\xi}_i\in\Bbb{R}^2 \colon |\overline{\xi}_i|< h_i \big\},$  ``$\prime$'' denotes the derivative with respect to the longitudinal variable $x_i,$
$$
\overline{V}^{(1)}:= \Big(v^{(1)}_2(x_1, \bar{\xi}_1), \, v^{(1)}_3(x_1,\bar{\xi}_1)\Big), \ \
\overline{V}^{(2)}:= \left(v^{(2)}_1(x_2, \bar{\xi}_2), \,   v^{(2)}_3(x_2,\bar{\xi}_2)\right), \ \ \overline{V}^{(3)}:=
 \left(v^{(3)}_1(x_3, \bar{\xi}_3), \, v^{(3)}_2(x_3,\bar{\xi}_3)\right);
 $$
 and
\begin{equation}\label{bc_2}
   \partial_{\bar{\nu}_{\bar{\xi}_i}} u^{(i)}_{p\alpha +k} = \big( w^{(i)}_{p\alpha +k -1} + u^{(i)}_{p\alpha +k -1} \big) \, \overline{V}^{(i)} \boldsymbol{\boldsymbol{\cdot}} \bar{\nu}_{\bar{\xi}_i} \,  - \,  \delta_{p\alpha +k -1, \, \alpha -1} \, \varphi^{(i)}\big(\bar{\xi}_i, x_i, t\big), \quad \bar{\xi}_i \in \partial \Upsilon_i,
\end{equation}
where $\partial_{\bar{\nu}_{\bar{\xi}_i}} $ is the derivative along the  outward unit normal $\bar{\nu}_{\bar{\xi}_i}$ to the boundary of the disk $\Upsilon_i,$  $\delta_{p\alpha +k -1, \, \alpha -1}$ is   the Kronecker delta, $\overline{V}^{(i)} \boldsymbol{\cdot} \bar{\nu}_{\bar{\xi}_i}$ is the scalar product of the vectors $\overline{V}^{(i)}$ and $\bar{\nu}_{\bar{\xi}_i}.$
\begin{remark}\label{r4*2}
In \eqref{bc_2} and below, we will omit the dependence of some coefficients on variables to simplify the writing of equations, if this does not cause confusion.
\end{remark}

The equations  \eqref{eq_2} and \eqref{bc_2} form the Neumann problems in $\Upsilon_i$ with respect to the variables $\bar{\xi}_i$ to find $u^{(i)}_{p\alpha +k}.$ In the right-hand sides of \eqref{eq_2} and \eqref{bc_2}  there are coefficients of the
ansatz \eqref{regul+alfa} with lower priorities. The variables $x_i$ and $t$ in these problems are considered as parameters from the set $I_\varepsilon^{(i)} \times (0, T)$, where $ I_\varepsilon^{(i)} := \{x_i\colon x_i \in (\varepsilon \ell_0, \ell_i), \ \bar{x}_i = (0, 0) \}.$
To ensure  uniqueness, we provide each  problem with the  condition
\begin{equation}\label{uniq_1}
  \langle u_{p\alpha +k}^{(i)}(x_1, \, \cdot \, ,  t ) \rangle_{\Upsilon_i} :=  \int_{\Upsilon_i} u_{p\alpha +k}^{(i)}\big(x_i, \bar{\xi}_i, t\big) \, d\bar{\xi}_i = 0.
\end{equation}

Considering the last part of Remark~\ref{r4*1}, the Neumann problem for $u^{(i)}_{\alpha -1}$ $(k=-1, \ p=1)$ is as follows
\begin{equation}\label{u_0}
  \Delta_{\bar{\xi}_i}u^{(i)}_{\alpha -1} = 0 \ \ \text{in } \ \ \Upsilon_i, \quad \partial_{\bar{\nu}_{\bar{\xi}_i}} u^{(i)}_{\alpha -1} = 0 \ \ \text{on} \ \ \partial \Upsilon_i, \quad \langle u_{\alpha -1}^{(i)}\rangle_{\Upsilon_i} = 0,
\end{equation}
whence $u^{(i)}_{\alpha -1} \equiv 0.$ Similarly, we obtain $u^{(i)}_{0} \equiv 0$ $(k= p=0).$

Thus, the ansatz  \eqref{regul+alfa} begins with the summand $\varepsilon^{\alpha -1} w^{(i)}_{\alpha -1}.$

For $k=0, \ p=1$ in \eqref{eq_2} and \eqref{bc_2},  we have the problem
\begin{equation}\label{p_2}
\left\{\begin{array}{rcl}
    \Delta_{\bar{\xi}_i}u^{(i)}_{\alpha} & = & \Big(v^{(i)}_i \,  w^{(i)}_{\alpha-1}\Big)^\prime
  +  \partial_t {w}_{\alpha-1}^{(i)} + {w}_{\alpha-1}^{(i)} \, \mathrm{div}_{\bar{\xi}_i}\overline{V}^{(i)} \quad  \text{in } \ \ \Upsilon_i,
\\[2mm]
\partial_{\bar{\nu}_{\bar{\xi}_i}} u^{(i)}_{\alpha}  &=&  w^{(i)}_{\alpha-1} \ \overline{V}^{(i)} \boldsymbol{\cdot} \bar{\nu}_{\bar{\xi}_i} - \varphi^{(i)}\big(\bar{\xi}_i, x_i, t\big)  \ \ \text{on} \ \ \partial \Upsilon_i, \qquad \langle u_{\alpha}^{(i)}\rangle_{\Upsilon_i} = 0;
\end{array}\right.
 \end{equation}
and for $k=1, \ p= 0$ the problem
\begin{equation}\label{p_1}
\left\{\begin{array}{rcl}
    \Delta_{\bar{\xi}_i}u^{(i)}_{1} & = & \Big(v^{(i)}_i \,  w^{(i)}_{0}\Big)^\prime
  +  \partial_t {w}_{0}^{(i)} + {w}_{0}^{(i)} \, \mathrm{div}_{\bar{\xi}_i}\overline{V}^{(i)} \quad  \text{in } \ \ \Upsilon_i,
\\[2mm]
\partial_{\bar{\nu}_{\bar{\xi}_i}} u^{(i)}_{1}  &=&  w^{(i)}_0 \ \overline{V}^{(i)} \boldsymbol{\boldsymbol{\cdot}} \bar{\nu}_{\bar{\xi}_i} \ \ \text{on} \ \ \partial \Upsilon_i, \qquad \langle u_{1}^{(i)}\rangle_{\Upsilon_i} = 0.
\end{array}\right.
 \end{equation}

Writing down the solvability condition for each of these problems, we deduce the differential equations for the coefficients  $w^{(i)}_{\alpha-1}$ and $w^{(i)}_0$:
\begin{equation}\label{lim_1}
  \partial_t{w}^{(i)}_{\alpha-1}(x_i,t) \, + \, \big( v_i^{(i)}(x_i)\, w^{(i)}_{\alpha-1}(x_i,t) \big)^\prime
  =  - \widehat{\varphi}^{(i)}(x_i, t), \quad (x_i, t) \in I_\varepsilon^{(i)} \times (0, T),
  \end{equation}
and
\begin{equation}\label{lim_0}
  \partial_t{w}^{(i)}_0(x_i,t) \, + \, \big( v_i^{(i)}(x_i)\, w^{(i)}_0(x_i,t) \big)^\prime = 0, \quad (x_i, t) \in I_\varepsilon^{(i)} \times (0, T),
\end{equation}
where
\begin{equation}\label{hat_phi}
  \widehat{\varphi}^{(i)}(x_i, t) := \frac{1}{\pi h^2_i} \int_{\partial \Upsilon_i} \varphi^{(i)}(x_i, \bar{\xi}_i, t\big)\, d\sigma_{\bar{\xi}_i}.
\end{equation}
Let   ${w}^{(i)}_{\alpha-1}$ and ${w}^{(i)}_0$  be solutions to the equations  \eqref{lim_1} and \eqref{lim_0}, respectively. We will show below how to choose suitable unique solutions. Then there exist unique solutions to the problems \eqref{p_1} and \eqref{p_2}, respectively, and the differential equations in these problems become
$$
 \Delta_{\bar{\xi}_i}u^{(i)}_{\alpha}  =  - \widehat{\varphi}^{(i)} + {w}_{\alpha-1}^{(i)} \, \mathrm{div}_{\bar{\xi}_i}\overline{V}^{(i)} \quad \text{and} \quad
 \Delta_{\bar{\xi}_i}u^{(i)}_{1}  =   {w}_{0}^{(i)} \, \mathrm{div}_{\bar{\xi}_i}\overline{V}^{(i)}.
$$

Writing down the solvability condition for the problem \eqref{eq_2}--\eqref{uniq_1}, we get the differential equation
\begin{equation}\label{lim_2}
  \partial_t {w}_{p\alpha +k-1}^{(i)}(x_i, t)   + \Big(  v^{(i)}_i(x_i) \,  w^{(i)}_{p\alpha +k-1}(x_i, t) \Big)^\prime
  = \Big( w^{(i)}_{p\alpha +k-2}(x_i,t)\Big)^{\prime\prime},  \quad (x_i, t) \in I_\varepsilon^{(i)} \times (0, T).
\end{equation}
 Let ${w}_{p\alpha +k-1}^{(i)}$ be a solution of \eqref{lim_2}. Then there exists a unique solution to the problem \eqref{eq_2}--\eqref{uniq_1} and the differential equation in this problem becomes
 \begin{align}\label{eq_2+}
 \Delta_{\bar{\xi}_i}u^{(i)}_{p\alpha +k}\big(x_i, \bar{\xi}_i, t\big)
  = &  \ \Big(  v^{(i)}_i(x_i) \,  u^{(i)}_{p\alpha +k-1}(x_i,\bar{\xi}_i, t)  \Big)^\prime
  +    \partial_t {u}_{p\alpha +k-1}^{(i)}(x_i,\bar{\xi}_i, t) -  \Big( u^{(i)}_{p\alpha +k-2}(x_i,\bar{\xi}_i, t) \Big)^{\prime\prime}  \notag
    \\
    + & \   \mathrm{div}_{\bar{\xi}_i} \Big( \overline{V}^{(i)}(x_i, \bar{\xi}_i) \,
            \big[ w^{(i)}_{p\alpha +k-1}(x_i, t) + u^{(i)}_{p\alpha +k-1}(x_i,\bar{\xi}_i, t) \big]\Big), \quad  \bar{\xi}_i \in \Upsilon_i.
\end{align}

\begin{remark}\label{r_2_1}
Since the functions $\{\varphi^{(i)}\}_{i=1}^3$ and $\{\overline{V}^{(i)}\}_{i=1}^3$  have compact supports with respect to the corresponding longitudinal variable, the coefficients $\{u_{p\alpha +k -1}^{(i)}\}$ vanish in the corresponding neighborhoods
of the ends of the segment $[0, \ell_i].$
\end{remark}

To find transmission conditions for  the functions $\{w_{p\alpha +k -1}^{(i)}\}, \ i\in \{1,2,3\}$ at the point $0,$ we should run the node-layer  part~ (\ref{junc+alfa}) of the asymptotics in a neighborhood of the node $\Omega^{(0)}_\varepsilon$. For this purpose we pass to the scaled variables $\xi=\frac{x}{\varepsilon}.$ Then, letting  $\varepsilon$ to $0,$ we see  that the domain $\Omega_\varepsilon$ is transformed into the unbounded domain $\Xi$ that  is the union of the domain~$\Xi^{(0)}$ and three semi-infinite cylinders
$$
\Xi^{(i)}  =  \{ \xi=(\xi_1,\xi_2,\xi_3)\in\Bbb R^3 \colon  \   \ell_0<\xi_i<+\infty,
    \quad |\overline{\xi}_i|<h_i\}, \quad i\in \{1,2,3\},
$$
i.e., $\Xi$ is the interior of the set $\bigcup_{i=0}^3\overline{\Xi^{(i)}}.$

Next, repeating the same procedure with the ansatz  \eqref{junc+alfa}, considering the incompressibility of $\overrightarrow{V_\varepsilon}^{(0)}$ in $\Omega^{(0)}_\varepsilon,$ and   matching  ansatzes \eqref{regul+alfa} and  \eqref{junc+alfa},
we get the problem
\begin{equation}\label{N_p_alpha +k_prob}
\left\{\begin{array}{rclll}
-   \Delta_\xi {N}_{p\alpha +k}(\xi,t) +
  \overrightarrow{V}(\xi) \boldsymbol{\cdot} \nabla_\xi {N}_{p\alpha +k}(\xi,t) & = &  - \partial_t {{N}}_{p\alpha +k-1}(\xi,t), &  \xi \in\Xi^{(0)},
\\[2mm]
- \partial_{\boldsymbol{\nu}_\xi}  N_{p\alpha +k}(\xi,t) &=& \delta_{p\alpha +k, \alpha }\varphi^{(0)}(\xi,t) , &   \xi \in \Gamma_0,
\\[2mm]
 -   \Delta_{\xi} {N}_{p\alpha +k}(\xi,t)  +
  \mathrm{v}_i \, \partial_{\xi_i}{N}_{p\alpha +k}(\xi,t) & = &  - \partial_t{{N}}_{p\alpha +k-1}(\xi,t), &
     \xi \in\Xi^{(i)},
\\[2mm]
\partial_{\bar{\nu}_{\bar{\xi}_i}} N_{p\alpha +k}(\xi,t)  &=&  0, &
   \xi \in \Gamma_i,
\\[2mm]
N_{p\alpha +k}(\xi,t)     \  \sim \   w^{(i)}_{p\alpha +k}(0,t) + \Psi^{(i)}_{p\alpha +k}(\xi_i,t)      &\text{as}  &\xi_i \to +\infty, &
    \xi  \in \Xi^{(i)}, \ \ i\in \{1,2,3\},
 \end{array}\right.
\end{equation}
 for each $k \in \Bbb N_0\cup \{-1\}$ and $ p\in \{0, 1\},$ where
\begin{equation}\label{Psi}
\Psi_{ p\alpha+k}^{(i)}(\xi_i,t) =   \sum\limits_{j=1}^{k+1} \frac{\xi_i^j}{j!}\,
     \dfrac{\partial^j w_{p\alpha + k -j}^{(i)}}{ \partial x_i^j} (0,t)
 \quad  i\in\{1,2,3\}.
\end{equation}
Note that the variable $t$ appears as a parameter  in the  steady convection-diffusion problems \eqref{N_p_alpha +k_prob}.

A solution with such a polynomial asymptotics at different exits to infinity is sought in the form
\begin{equation}\label{new-solution_p_alpha +k}
N_{p\alpha +k}(\xi,t)  = \sum\limits_{i=1}^3\big(w^{(i)}_{p\alpha +k}(0,t) + \Psi^{(i)}_{p\alpha +k}(\xi_i,t) \big) \,\chi_{\ell_0}(\xi_i) + \widetilde{N}_{p\alpha +k}(\xi,t),
\end{equation}
where $ \chi_{\ell_0} \in C^{\infty}(\Bbb{R})$ is a smooth cut-off function such that
$\ 0\leq \chi_{\ell_0} \leq1,$  $\chi_{\ell_0}(s) =0$ if $s \leq  2\ell_0$  and
$\chi_{\ell_0}(s) =1$ if $s \geq  3\ell_0.$ Then $\widetilde{N}_{p\alpha +k}$ must be a solution to the problem
\begin{equation}\label{N_p_alpha +k_prob+}
\left\{\begin{array}{rclll}
-   \Delta_\xi {\widetilde{N}}_{p\alpha +k} +
  \overrightarrow{V}(\xi) \boldsymbol{\cdot} \nabla_\xi {\widetilde{N}}_{p\alpha +k} & = &  - \partial_t {{N}}_{p\alpha +k-1}(\xi,t), &  \xi \in\Xi^{(0)},
\\[2mm]
- \partial_{\boldsymbol{\nu}_\xi}  \widetilde{N}_{p\alpha +k}(\xi,t) &=& \delta_{p\alpha +k, \alpha }\varphi^{(0)}(\xi,t) , &   \xi \in \Gamma_0,
\\[2mm]
 -   \Delta_{\xi} \widetilde{N}_{p\alpha +k}  +
  \mathrm{v}_i \, \partial_{\xi_i}\widetilde{N}_{p\alpha +k} & = & F_{p\alpha +k}(\xi_i,t) - \partial_t{{N}}_{p\alpha +k-1}(\xi,t), &
     \xi \in\Xi^{(i)},
\\[2mm]
\partial_{\bar{\nu}_{\bar{\xi}_i}} \widetilde{N}_{p\alpha +k}(\xi,t)  &=&  0, &
   \xi \in \Gamma_i,
\\[2mm]
\widetilde{N}_{p\alpha +k}(\xi,t)     \  \rightarrow  \   0  &\text{as}  &\xi_i \to +\infty, &
    \xi  \in \Xi^{(i)}, \ \ i\in \{1,2,3\},
 \end{array}\right.
\end{equation}
where
\begin{align*}
  F_{p\alpha +k}(\xi_i,t) := & \ w^{(i)}_{p\alpha +k}(0,t) \,\chi''_{\ell_0}(\xi_i) - \mathrm{v}_i \, w^{(i)}_{p\alpha +k}(0,t) \,\chi'_{\ell_0}(\xi_i)
     \\
   & +  \ \partial^2_{\xi_i \xi_i}\Big(\Psi^{(i)}_{p\alpha +k}(\xi_i,t) \big) \,\chi_{\ell_0}(\xi_i)\Big) -  \mathrm{v}_i \, \partial_{\xi_i}\Big(\Psi^{(i)}_{p\alpha +k}(\xi_i,t) \big) \,\chi_{\ell_0}(\xi_i)\Big).
\end{align*}

Proposition~A.1 from \cite[Appendix A]{Mel-Roh_preprint-2023} asserts  that the necessary and sufficient condition for the  unique solvability of the problem \eqref{N_p_alpha +k_prob+} in the Sobolev space of functions exponentially decreasing to zero is the equality
\begin{equation}\label{solv_cond}
\sum_{i=1}^{3}\int_{\Xi^{(i)}} \big(F_{p\alpha +k} - \partial_t{{N}}_{p\alpha +k-1}\big)\, d\xi = \int_{\Xi^{(0)}} \partial_t {{N}}_{p\alpha +k-1} \, d\xi + \delta_{p\alpha +k, \alpha }\int_{\Gamma_0} \varphi^{(0)}\, d \sigma_\xi.
\end{equation}

Since
$$
\partial^2_{\xi_i \xi_i}\Psi^{(i)}_{p\alpha +k}(\xi_i,t)  -  \mathrm{v}_i \, \partial_{\xi_i}\Psi^{(i)}_{p\alpha +k}(\xi_i,t) =
\partial_t w^{(i)}_{p\alpha +k-1}(0,t) + \partial_t\Psi^{(i)}_{p\alpha +k-1}(\xi_i,t),
$$
the difference
\begin{gather}\label{F_alpha}
F_{p\alpha +k}(\xi_i,t) - \partial_t{{N}}_{p\alpha +k-1}(\xi,t) = w^{(i)}_{p\alpha +k}(0,t) \,\chi''_{\ell_0}(\xi_i) - \mathrm{v}_i \, w^{(i)}_{p\alpha +k}(0,t) \,\chi'_{\ell_0}(\xi_i)
\notag
  \\
  +\Big(\partial_{\xi_i}\Psi^{(i)}_{p\alpha +k}(\xi_i,t)   -
\mathrm{v}_i \, \Psi^{(i)}_{p\alpha +k}(\xi_i,t) \Big)\, \chi'_{\ell_0}(\xi_i) + \Big(\Psi^{(i)}_{p\alpha +k}(\xi_i,t) \, \chi'_{\ell_0}(\xi_i)\Big)'
- \partial_t{\widetilde{N}}_{p\alpha +k-1}(\xi,t).
\label{F_alpha}
\end{gather}

Using \eqref{F_alpha}, the equality \eqref{solv_cond}  can be rewritten as the gluing condition for the functions $\{w^{(i)}_{p\alpha +k}\}_{i=1}^3$  at the origin
\begin{equation}\label{trans*k-p*alpfa}
    \sum\limits_{i=1}^3  \mathrm{v}_i  \, h_i^2\, w_{p\alpha +k}^{(i)} (0,t)
 =  \boldsymbol{d_{p\alpha +k}}(t),
\end{equation}
where
\begin{align}\label{const_d*k-p*alpfa}
\boldsymbol{d_{p\alpha +k}}(t)
 = & \  - \delta_{p\alpha +k, \alpha }\, \frac{1}{\pi}\int_{\Gamma_0} \varphi^{(0)}\, d \sigma_\xi  -\frac{1}{\pi} \int_{\Xi^{(0)}} \partial_t {{N}}_{p\alpha +k-1} \, d\xi  - \frac{1}{\pi}  \sum_{i=1}^{3}\int_{\Xi^{(i)}} \partial_t{\widetilde{N}}_{p\alpha +k-1}\, d\xi \notag
 \\
 &\ +\sum_{i=1}^{3} h_i^2 \int_{2 \ell_0}^{3 \ell_0} \big(\partial_{\xi_i}\Psi^{(i)}_{p\alpha +k}(\xi_i,t)   -
\mathrm{v}_i \, \Psi^{(i)}_{p\alpha +k}(\xi_i,t) \big)\, \chi'_{\ell_0}(\xi_i) d\xi.
\end{align}

For $p=1, \ k = -1$ and $p=0, \ k=0$ the value $\boldsymbol{d_{p\alpha +k}} \equiv 0.$ Thus, to determine   $\{w^{(i)}_{\alpha-1}\}_{i=1}^3$ and $\{w^{(i)}_0\}_{i=1}^3$ we get the following problems on the graph $\mathcal{I}$:
\begin{equation}\label{limit_al-1}
\left\{\begin{array}{l}
   \partial_t{w}^{(i)}_{\alpha-1}(x_i,t) \, + \, \big( v_i^{(i)}(x_i)\, w^{(i)}_{\alpha-1}(x_i,t) \big)^\prime
  =  - \widehat{\varphi}^{(i)}(x_i, t), \quad (x_i, t) \in (0, \ell_i)\times (0, T), \ \ i\in \{1, 2, 3\},
\\[2mm]
 \sum_{i=1}^3  \mathrm{v}_i  \, h_i^2\, w_{\alpha-1}^{(i)} (0,t) = 0 \ \ \text{for any} \ \ t \in [0, T],
 \\[2mm]
 w^{(1)}_{\alpha-1}(\ell_1,t) = 0\ \ \text{for any} \ \ t \in [0, T], \qquad {w}^{(i)}_{\alpha-1}(x_i,0) =0 \ \ \text{for any} \ \ x_i\in [0, \ell_i], \ \ i\in \{1, 2, 3\} ;
 \end{array}\right.
 \end{equation}
and
\begin{equation}\label{limit_0}
\left\{\begin{array}{l}
    \partial_t{w}^{(i)}_0(x_i,t) \, + \, \big( v_i^{(i)}(x_i)\, w^{(i)}_0(x_i,t) \big)^\prime = 0, \quad (x_i, t) \in (0, \ell_i)\times (0, T), \quad i\in \{1, 2, 3\},
\\[2mm]
 \sum_{i=1}^3  \mathrm{v}_i  \, h_i^2\, w_{0}^{(i)} (0,t) = 0 \ \ \text{for any} \ \ t \in [0, T],
 \\[2mm]
  w^{(1)}_0(\ell_1,t) =q_1(t) \ \ \text{for any} \ \ t \in [0, T], \qquad {w}^{(i)}_0(x_i,0) =0 \ \ \text{for any} \ \ x_i\in [0, \ell_i], \ \ i\in \{1, 2, 3\}.
 \end{array}\right.
 \end{equation}
In these problems, there is only one boundary condition at the end $x_1 =\ell_1$ because there is only one input cylinder with respect to the vector field $\overrightarrow{V_\varepsilon}.$ This is in full agreement with the approach proposed in \cite[\S 3.2.1]{Mel-Roh_preprint-2022}. According to this approach, we first find a solution to the corresponding hyperbolic mixed problem with a given boundary condition and initial condition. For example, for \eqref{limit_al-1} this is the following problem:
\begin{equation}\label{mixedt_al-1}
\left\{\begin{array}{l}
   \partial_t{w}^{(i)}_{\alpha-1}(x_1,t) \, + \, \big( v_1^{(1)}(x_1)\, w^{(1)}_{\alpha-1}(x_1,t) \big)^\prime
  =  - \widehat{\varphi}^{(1)}(x_1, t), \quad (x_1, t) \in (0, \ell_1)\times (0, T),
 \\[2mm]
 w^{(1)}_{\alpha-1}(\ell_1,t) = 0\ \ \text{for any} \ \ t \in [0, T], \qquad {w}^{(1)}_{\alpha-1}(x_1,0) =0 \ \ \text{for any} \ \ x_1\in [0, \ell_1],
 \end{array}\right.
 \end{equation}
The solvability criteria base on the characteristics method. Since $\widehat{\varphi}^{(1)}(\ell_1,0) =0$, the problem  \eqref{mixedt_al-1}
has a unique classical solution; in addition the explicit  representation is possible  for it  (see \cite[\S 3.2.1]{Mel-Roh_preprint-2022}). From this representation it follows that $\partial_t w_{\alpha-1}^{(1)}(0,0)=0.$

Then  ${w}^{(2)}_{\alpha-1}$ and ${w}^{(3)}_{\alpha-1}$ are defined as  classical solutions to the problems
\begin{equation}\label{mixedt_al-1}
\left\{\begin{array}{l}
     \partial_t{w}^{(i)}_{\alpha-1}(x_i,t) \, + \, \big( v_i^{(i)}(x_i)\, w^{(i)}_{\alpha-1}(x_i,t) \big)^\prime
  =  - \widehat{\varphi}^{(i)}(x_i, t), \quad (x_i, t) \in (0, \ell_i)\times (0, T),
 \\[2mm]
 w^{(i)}_{\alpha-1}(0,t) = w^{(1)}_{\alpha-1}(0,t) \ \ \text{for any} \ \ t \in [0, T], \qquad {w}^{(i)}_{\alpha-1}(x_i,0) =0 \ \ \text{for any} \ \ x_i\in [0, \ell_i],
 \end{array}\right.
 \end{equation}
 $i\in \{2, 3\},$ respectively. Since $ w_{\alpha-1}^{(1)}(0,0) = \partial_t w_{\alpha-1}^{(1)}(0,0)=0$ and $\widehat{\varphi}^{(2)}(0, 0) =\widehat{\varphi}^{(3)}(0, 0) =0,$ the solvability conditions are satisfied for these problem. Moreover, thanks to \eqref{cond_1}, the gluing condition at the origin is fulfilled in the problem \eqref{limit_al-1}.

 Thus, the  problem \eqref{limit_al-1} has a unique classical solution, and, interestingly, the continuity condition and the Kirchhoff condition are simultaneously satisfied at the graph vertex. In the same way, we justify the  existence and uniqueness of the classical solution to the problem \eqref{limit_0} (now  due to \eqref{match_conditions}), for which  the continuity  condition and the Kirchhoff condition are also satisfied.

 Having found these solutions, we can uniquely determine both solutions to the problems \eqref{p_2} and \eqref{p_1} and  solutions to the problems
\begin{equation}\label{N_p_alpha-1+0}
\left\{\begin{array}{rclll}
-   \Delta_\xi {\widetilde{N}}_{p\alpha +k} +
  \overrightarrow{V}(\xi) \boldsymbol{\cdot} \nabla_\xi {\widetilde{N}}_{p\alpha +k} & = & 0&  \text{in}\ \ \Xi^{(0)},
\\[2mm]
 -   \Delta_{\xi} \widetilde{N}_{p\alpha +k}  +
  \mathrm{v}_i \, \partial_{\xi_i}\widetilde{N}_{p\alpha +k} & = & w^{(i)}_{p\alpha +k}(0,t) \big(\chi''_{\ell_0}(\xi_i) - \mathrm{v}_i \chi'_{\ell_0}(\xi_i)\big) & \text{in}\ \ \Xi^{(i)},
\\[2mm]
- \partial_{\boldsymbol{\nu}_\xi}  \widetilde{N}_{p\alpha +k} &=& 0 &   \text{on}\  \ \partial\Xi,
\\[2mm]
\widetilde{N}_{p\alpha +k}(\xi,t)     \  \rightarrow  \   0  &\text{as}  &\xi_i \to +\infty, &
    \xi  \in \Xi^{(i)}, \ \ i\in \{1,2,3\},
 \end{array}\right.
\end{equation}
for $p=1, \ k= -1$ and for $p=k=0$, respectively. Since the right-hand sides  in the problem \eqref{N_p_alpha-1+0} are uniformly bounded with respect to $(\xi, t)\in \Xi\times [0, T]$ and have compact supports,  the corresponding solutions ${N}_{\alpha-1}$ and $N_{0}$ have  the following  asymptotics uniform with respect to $t\in [0, T]$:
\begin{equation}\label{rem_exp-decrease+0}
N_{p\alpha +k}(\xi,t) = w^{(i)}_{p\alpha +k}(0,t)  +  \mathcal{ O}(\exp(-\beta_0\xi_i))
\quad \mbox{as} \ \ \xi_i\to+\infty,  \ \  \xi  \in \Xi^{(i)},  \quad i=\{1,2,3\} \quad (\beta_0 >0)
\end{equation}
for $p=1, \ k= -1$ and $p=k=0$. It is easy to verify that $\partial_t{N}_{\alpha-1}$ and $\partial_t N_{0}$ have similar uniform asymptotics as well. In addition,
\begin{equation*}
  N_{p\alpha +k}\big|_{t=0} \equiv \widetilde{N}_{p\alpha +k}\big|_{t=0} \equiv 0, \quad \partial_t N_{p\alpha +k}\big|_{t=0} \equiv \partial_t \widetilde{N}_{p\alpha +k}\big|_{t=0} \equiv 0, \quad u_{p\alpha +k+1}^{(i)}\big|_{t=0} \equiv 0, \quad i\in \{1, 2, 3\}
\end{equation*}
for $p=1, \ k= -1$ and $p=k=0$.

\begin{remark}\label{additional}
It follows from the differential equations \eqref{lim_2} and \eqref{eq_2+}  that  $\{w_{p\alpha +k -1}^{(i)}\}$ and $\{u_{p\alpha +k}^{(i)}\}$
are defined in terms of the second derivatives of $\{w_{p\alpha +k -2}^{(i)}\}$ and $\{u_{p\alpha +k-2}^{(i)}\}.$
This means that $\{w^{(i)}_{\alpha-1}\}_{i=1}^3$ and $\{w^{(i)}_0\}_{i=1}^3$, as well as the other coefficients, must be infinitely differentiable.
Therefore, additional smoothness of the given functions and additional matching conditions are necessary, namely
\begin{itemize}
  \item $\{\varphi^{(i)}, \ q_i, \overline{V}^{(i)}, v^{(i)}_i\}_{i=1}^3$  belong to the class $C^\infty$ in their domains of definition, the function $\varphi^{(0)}$ is infinitely differentiable in $t\in [0, T],$
  \item for each $n\in \Bbb N$ and $i\in \{1, 2, 3\}$
  \begin{equation}\label{match_cond+}
   \frac{d^n q_1 }{dt^n}\Big|_{t=0} =  0\quad \text{and} \quad \frac{\partial^n\varphi^{(0)}}{\partial t^n}\Big|_{t=0}= 0.
\end{equation}
\end{itemize}
\end{remark}

Using the explicit  representation and additional assumptions, it can be verified that  $\partial^n_t w_{\alpha-1}^{(i)}(0, 0) = 0$  and $\partial^n_t w_{0}^{(i)}(0, 0) = 0$ for each $n\in \Bbb N.$ This means also that $\partial^n_t N_{p\alpha +k}\big|_{t=0} \equiv \partial^n_t \widetilde{N}_{p\alpha +k}\big|_{t=0} \equiv 0$ for $p=1, \ k= -1$ and $p=k=0$.

The influence of the interaction on the node boundary begins to manifest itself from the coefficient  $N_{\alpha }$ that is a solution to the problem \eqref{N_p_alpha +k_prob} for $p=1, \ k= 0.$ In this case, the solvability condition \eqref{trans*k-p*alpfa} for
$\widetilde{N}_{\alpha }$ has the non-zero right-hand side
\begin{align}\label{const_d_alpfa}
\boldsymbol{d_{\alpha}}(t)
 = & \ -  \frac{1}{\pi}\int_{\Gamma_0} \varphi^{(0)}\, d \sigma_\xi -\frac{1}{\pi} \int_{\Xi^{(0)}} \partial_t {{N}}_{\alpha -1} \, d\xi   - \frac{1}{\pi}  \sum_{i=1}^{3}\int_{\Xi^{(i)}} \partial_t{\widetilde{N}}_{\alpha -1}\, d\xi \notag
 \\
& \ +\sum_{i=1}^{3} h_i^2 \partial_{x_i}w^{(i)}_{\alpha -1}(0,t)
\Big(1 - \mathrm{v}_i  \int_{2\ell_0}^{3\ell_0} \xi_i\,  \chi^\prime_{\ell_0}(\xi_i) \, d\xi_i\Big).
\end{align}
The  coefficients $\{w^{(i)}_{\alpha}\}_{i=1}^3$ form a  solution to the problem
\begin{equation}\label{limit_al}
\left\{\begin{array}{l}
   \partial_t{w}^{(i)}_{\alpha}(x_i,t) \, + \, \big( v_i^{(i)}(x_i)\, w^{(i)}_{\alpha}(x_i,t) \big)^\prime
  =  \big( w^{(i)}_{\alpha -1}(x_i,t)\big)^{\prime\prime}, \quad (x_i, t) \in (0, \ell_i)\times (0, T), \ \ i\in \{1, 2, 3\},
\\[2mm]
 \sum_{i=1}^3  \mathrm{v}_i  \, h_i^2\, w_{\alpha}^{(i)} (0,t) = \boldsymbol{d_{\alpha}}(t)  \ \ \text{for any} \ \ t \in [0, T],
 \\[2mm]
 w^{(1)}_{\alpha}(\ell_1,t) = 0\ \ \text{for any} \ \ t \in [0, T], \qquad {w}^{(i)}_{\alpha}(x_i,0) =0 \ \ \text{for any} \ \ x_i\in [0, \ell_i], \ \ i\in \{1, 2, 3\}.
 \end{array}\right.
 \end{equation}
As before, $w^{(1)}_{\alpha}$ is a solution to the corresponding hyperbolic mixed problem. But now, since $ \boldsymbol{d_{\alpha}}\neq0,$ we cannot additionally satisfy the continuity condition at the vertex, as for the problems \eqref{limit_al-1} and \eqref{limit_0}. Following approach of \cite{Mel-Roh_preprint-2023}, we propose  the weighted incoming concentration average in the boundary conditions
for $w^{(2)}_{\alpha}$ and $w^{(3)}_{\alpha}$, namely
\begin{equation}\label{bc_2_3}
  w_{\alpha}^{(i)}(0,  t)  =    \dfrac{1}{2\, \mathrm{v}_i \, h_i^2 }  \Big(\boldsymbol{d_{\alpha}}(t) -  \mathrm{v}_1 \, h_1^2 \,  w_{\alpha}^{(1)}(0,  t)\Big), \ \  t \in [0, T], \quad i\in \{2, 3\}.
\end{equation}
Taking into account the assumptions made in Remark~\ref{additional},
 we check the validity of the classical solvability criteria $(\boldsymbol{d_{\alpha}}(0)=\boldsymbol{d_{\alpha}}'(0)=0)$ and the infinitely differentiability of solutions.

Thus, the problem \eqref{limit_al} has a classical solution. This means that the solvability condition both for the problem to define $\widetilde{N}_{\alpha }$ (the problem \eqref{N_p_alpha +k_prob+} for  $p=1$ and $k=0)$ and for the Neumann  problem  \eqref{eq_2+}, \eqref{bc_2} and \eqref{uniq_1} $(p=k=1)$ to define $u^{(i)}_{\alpha+1}$  is satisfied.
 In addition, since $\partial_t {\widetilde{N}}_{\alpha-1}$ uniformly in $t\in [0, T]$  decreases exponentially to zero (see \eqref{rem_exp-decrease+0})
and the other summands in  the right-hand side
\begin{align*}
F_{\alpha} - \partial_t{{N}}_{p\alpha -1} = &\  - \partial_t{\widetilde{N}}_{\alpha -1}(\xi,t) + w^{(i)}_{\alpha}(0,t) \,\chi''_{\ell_0}(\xi_i) - \mathrm{v}_i \, w^{(i)}_{\alpha}(0,t) \,\chi'_{\ell_0}(\xi_i)
  \\
  &\
  +  \partial_{x_i}w^{(i)}_{\alpha -1}(0,t)\, \big(1  - \mathrm{v}_i \, \xi_i \big)\, \chi'_{\ell_0}(\xi_i) + \partial_{x_i}w^{(i)}_{\alpha -1}(0,t)\,  \big(\xi_i \, \chi'_{\ell_0}(\xi_i)\big)'
\end{align*}
in the problem \eqref{N_p_alpha +k_prob+}  $(p=1, \ k=0)$ are uniformly bounded with respect to $(\xi, t)\in \Xi\times [0, T]$ and have the compact supports,  the solution $N_\alpha$  to the problem \eqref{N_p_alpha +k_prob}  has  the following  asymptotics uniform with respect to $t\in [0, T]$:
\begin{equation}\label{rem_exp-decrease+1}
N_\alpha(\xi,t) = w^{(i)}_{\alpha}(0,t)  +  \xi_i \, \partial_{x_i}w^{(i)}_{\alpha -1}(0,t) + \mathcal{ O}(\exp(-\beta_0\xi_i))
\quad \mbox{as} \ \ \xi_i\to+\infty,  \ \  \xi  \in \Xi^{(i)},   \quad i\in \{1, 2, 3\}.
\end{equation}
It is easy to verify that
\begin{equation*}
   N_{\alpha }\big|_{t=0} \equiv  \partial_t N_{\alpha }\big|_{t=0} \equiv  \partial^2_t N_{\alpha }\big|_{t=0} \equiv 0, \quad
  u_{\alpha +1}^{(i)}\big|_{t=0} \equiv 0, \quad i\in \{1,2,3\}.
\end{equation*}

Next, assuming that all  coefficients
$$
\big\{\{w^{(i)}_{p\alpha +m}\}_{i=1}^3, \ N_{p\alpha +m}\big\}_{p\in \{0,1\},\  m\in \{-1,0,\ldots, k-1\}}\quad \text{and} \quad   \big\{u^{(i)}_{p\alpha +m}\big\}_{i\in \{1,2,3\}, \ p\in \{0,1\},\  m\in \{-1,0,\ldots, k\}}
$$   are determined and  that they and  their derivatives in $t$   vanish at $t=0,$ we first determine the value $\boldsymbol{d_{p\alpha +k}}(t)$ by \eqref{const_d*k-p*alpfa}, then the coefficients
\begin{itemize}
  \item $\{w^{(i)}_{p\alpha +k}\}_{i=1}^3$ as a solution to the problem
  \begin{equation}\label{limit_al+k}
\left\{\begin{array}{l}
\partial_t {w}_{p\alpha +k}^{(i)} + \big(  v^{(i)}_i(x_i) \,  w^{(i)}_{p\alpha +k} \big)^\prime
  = \big( w^{(i)}_{p\alpha +k-1}\big)^{\prime\prime},  \quad (x_i, t) \in (0, \ell_i)\times (0, T), \ \ i\in \{1, 2, 3\},
\\[2mm]
 \sum_{i=1}^3  \mathrm{v}_i  \, h_i^2\, w_{p \alpha+ k }^{(i)} (0,t) = \boldsymbol{d_{p\alpha+k}}(t)  \ \ \text{for any} \ \ t \in [0, T],
 \\[2mm]
 w^{(1)}_{p\alpha+k}(\ell_1,t) = 0\ \ \text{for any} \ \ t \in [0, T], \qquad {w}^{(i)}_{p\alpha+k}(x_i,0) =0 \ \ \text{for any} \ \ x_i\in [0, \ell_i], \ \ i\in \{1, 2, 3\};
 \end{array}\right.
 \end{equation}
  \item then for each $i\in \{1,2,3\}$ the coefficient $u^{(i)}_{p\alpha + k+1}$ as a solution to the corresponding Neumann problem
  \eqref{eq_2+}, \eqref{bc_2}, \eqref{uniq_1};
  \item and finally, the coefficient $N_{p\alpha +k}$ as a solution to the problem \eqref{N_p_alpha +k_prob}, which has
  the following  asymptotics uniform with respect to $t\in [0, T]$:
\begin{equation}\label{rem_exp-decrease}
N_{p\alpha +k}(\xi,t) = w^{(i)}_{p\alpha +k}(0,t) + \Psi^{(i)}_{p\alpha +k}(\xi_i,t)  +  \mathcal{ O}(\exp(-\beta_0\, \xi_i))
\quad \mbox{as} \ \ \xi_i\to+\infty,  \ \  \xi  \in \Xi^{(i)},
\end{equation}
where $\Psi^{(i)}_{p\alpha +k}$ is defined in \eqref{Psi},  $i=\{1,2,3\},$ and $\beta_0 >0.$
\end{itemize}
In addition, it is easy to verify that all these coefficients  vanish at $t=0.$

\medskip

Unfortunately, the regular ansatzes  $\mathcal{U}_{\varepsilon}^{(2)}$ and $\mathcal{U}_{\varepsilon}^{(3)}$
don't satisfy the boundary conditions at  the bases $\Upsilon_{\varepsilon}^{(2)} (\ell_2)$ and $\Upsilon_{\varepsilon}^{(3)} (\ell_3),$ respectively. Therefore, we must run the boundary-layer parts \eqref{exp-EVl+alfa} of the asymptotics compensating the residuals of the regular one at $\Upsilon_{\varepsilon}^{(2)} (\ell_2)$ and $\Upsilon_{\varepsilon}^{(3)} (\ell_3)$.

It is additionally assumed that the component $v_i^{(i)}$ of the vector-valued function $\overrightarrow{V_\varepsilon}^{(i)}$  is independent of the variable $x_i$ in a neighborhood of $\Upsilon_{\varepsilon}^{(i)}(\ell_i)$  $(i\in\{2, 3\})$.   This is a technical assumption. In the general case, the function  $v_i^{(i)}$ must be expanded in a Taylor series in a neighborhood of the point $\ell_i.$

Substituting  \eqref{exp-EVl+alfa}  into the differential equation and boundary conditions  of the problem \eqref{probl}  in
 a neighborhood the base $\Upsilon_{\varepsilon}^{(i)} (\ell_i)$ of the thin cylinder $\Omega_{\varepsilon}^{(i)}$ and collecting  coefficients at the same powers of~$\varepsilon$, we get the following  problems:
\begin{equation}\label{prim+probl+k}
 \left\{\begin{array}{rcll}
    \Delta_\eta \Pi_{p\alpha +k}^{(i)}(\eta,t) +  v_i^{(i)}(\ell_i) \partial_{\eta_i}\Pi_{p\alpha +k}^{(i)}(\eta,t)
  & =    & \partial_{t}\Pi_{p\alpha +k-1}^{(i)}(\eta,t),
   &  \eta\in \mathfrak{C}_+^{(i)},
   \\[2mm]
  \partial_{\nu_{\overline{\eta}_1}} \Pi_{p\alpha +k}^{(i)}(\eta,t) & =
   & 0,
   &  \eta\in \partial\mathfrak{C}_+^{(i)} \setminus \Upsilon^{(i)},
   \\[2mm]
  \Pi_{p\alpha +k}^{(i)}\big|_{\eta_i = 0} & =
   & \Phi^{(i)}_{p\alpha +k}(t) ,
   &  \overline{\eta}_i\in\Upsilon^{(i)},
   \\[2mm]
  \Pi_{p\alpha +k}^{(i)}(\eta,t) & \to
   & 0 \ \ \text{as} \ \ \eta_i\to+\infty,
   &  \end{array}\right.
\end{equation}
for $k\in \Bbb N_0 \cup \{-1\}, \ p\in \{0, 1\},\ i\in \{2, 3\}.$ In this sequence of problems,  $\eta=(\eta_1, \eta_2, \eta_3),$ $\eta_i = \frac{\ell_i - x_i}{\varepsilon},$ $\overline{\eta}_i=\frac{\overline{x}_i}{\varepsilon},$
\begin{gather*}
\Upsilon^{(i)}:=\big\{\overline{\eta}_i \in \Bbb R^2\colon |\overline{\eta}_i | < h_i\big\}, \qquad
\mathfrak{C}_+^{(i)}:=\big\{\eta \colon \  \overline{\eta}_i  \in\Upsilon^{(i)}, \quad \eta_i\in(0,+\infty)\big\},
   \\
  \Phi^{(i)}_{p\alpha +k}(t) := \delta_{p\alpha +k, 0} \,  q_{i}(t) - w_{p\alpha +k}^{(i)}(\ell_i,t), \qquad \Pi_{-2}^{(i)} \equiv \Pi_{-1}^{(i)} \equiv
  \Pi_{\alpha-2}^{(i)} \equiv 0.
\end{gather*}

Using the Fourier method, we find solutions to these problems step by step, e.g.,
\begin{gather}\label{Pi_0}
\Pi_{\alpha-1}^{(i)}(\eta_i,t) = \Phi_{\alpha-1}(t) \, e^{- v_i^{(i)}(\ell_i) \, \eta_i}, \qquad \Pi_0^{(i)}(\eta_i,t) =
\Phi_0(t) \, e^{- v_i^{(i)}(\ell_i) \, \eta_i} , \notag
\\
\Pi_{\alpha}^{(i)}(\eta_1,t)  = \left(\Phi_{\alpha}(t) - \frac{\partial_t {\Phi}_{\alpha-1}(t)}{v_i^{(i)}(\ell_3)}\, \eta_1 \right)   e^{-v_i^{(i)}(\ell_i) \, \eta_i}
\end{gather}
Since $v_i^{(i)}(\ell_i)  > 0$  and the factor near $e^{-v_i^{(i)}\, \eta_i}$ are bounded with respect to $t\in [0,T],$
\begin{equation}\label{exp_decay}
  \Pi_{p \alpha +k}^{(i)}(\eta_1,t) = \mathcal{O}\big(e^{- \theta_i\, \eta_i}\big) \quad \text{as} \quad \eta_i \to +\infty
\end{equation}
uniformly in  $t\in [0, T],$ where $\theta_i = \frac{v_i^{(i)}(\ell_i)}{2}.$  Obviously, $\Pi_{p \alpha +k}^{(i)}\big|_{t=0} =0.$

Thus, we  can successively determine all coefficients of the ansatzes \eqref{regul+alfa} -- \eqref{exp-EVl+alfa}.

\section{Justification and asymptotic estimates}\label{justification}
With the help of the smooth cut-off functions  $\chi_{\ell_0}$  (see~\eqref{new-solution_p_alpha +k}) and
\begin{equation}\label{cut-off_functions}
\chi_\delta^{(i)} (x_i) =
    \left\{
    \begin{array}{ll}
        1, & \text{if} \ \ x_i \ge \ell_i -  \delta,
    \\
        0, & \text{if} \ \ x_i \le \ell_i - 2\delta,
    \end{array}
    \right.
\quad i \in \{2, 3\},
\end{equation}
where $\delta$ is a sufficiently small fixed positive number such that $\chi_\delta^{(i)}$ vanishes in the support of $\varphi_\varepsilon^{(i)},$
and the ansatzes \eqref{regul+alfa} -- \eqref{exp-EVl+alfa}, we construct the following series in $\Omega_\varepsilon$:
\begin{equation}\label{asymp_expansion}
    \mathfrak{U}^{(\varepsilon)} :=
        \sum\limits_{k=0}^{+\infty} \sum\limits_{p=0}^{1}  \varepsilon^{p\alpha +k -1 }
        \left\{
  \begin{array}{ll}
   \mathcal{U}^{(1)}_{p\alpha +k -1 }(x, t; \varepsilon) & \text{in} \  \  \Omega^{(1)}_{\varepsilon,3\ell_0,\gamma},
\\[3pt]
   \mathcal{U}^{(i)}_{p\alpha +k -1}(x, t; \varepsilon) +  \mathcal{P}^{(i)}_{p\alpha +k -1}(x_i, t; \varepsilon)& \text{in} \ \
    \Omega^{(i)}_{\varepsilon,3\ell_0,\gamma}, \ \ i\in\{2,3\},
    \\[3pt]
    N_{p\alpha +k -1} \big( \tfrac{x}{\varepsilon}, t \big) & \text{in} \ \  \Omega^{(0)}_{\varepsilon, \gamma},
\\[3pt]
\chi_{\ell_0}\big(\frac{x_i}{\varepsilon^\gamma}\big)\, w_{p\alpha +k -1}^{(i)} (x_i) +
\Big(1- \chi_{\ell_0}\big(\frac{x_i}{\varepsilon^\gamma}\big)\Big) N_{p\alpha +k -1} & \text{in} \ \
\Omega^{(i)}_{\varepsilon,2\ell_0,3\ell_0,\gamma }, \ \ i\in\{1,2,3\},
  \end{array}
\right.
\end{equation}
where $\gamma$ is a fixed number from the interval $(\frac23, 1),$
\begin{equation}\label{U-P}
     \mathcal{U}^{(i)}_{p\alpha +k -1}(x, t; \varepsilon) :=
   w_{p\alpha +k -1}^{(i)} (x_i) + u_{p\alpha +k -1}^{(i)} \big( x_i, \tfrac{\overline{x}_i}{\varepsilon}, t\big),
   \quad
    \mathcal{P}^{(i)}_{p\alpha +k -1}(x_i, t; \varepsilon)  := \chi_\delta^{(i)}(x_i) \,
       \Pi_{p \alpha +k}^{(i)}\big(\tfrac{\ell_i - x_i}{\varepsilon}, t \big),
\end{equation}
 and the parts of the thin graph-like junction $\Omega_\varepsilon$ are defined as follows
\begin{gather*}
\Omega^{(i)}_{\varepsilon,3\ell_0,\gamma}  := \Omega^{(i)}_\varepsilon \, \cap\,  \big\{x\colon x_i \in [3\ell_0 \varepsilon^\gamma, \ell_i]\big\}, \qquad
 \Omega^{(i)}_{\varepsilon,2\ell_0,3\ell_0,\gamma }  := \Omega^{(i)}_\varepsilon \, \cap \, \big\{x\colon x_i \in [2\ell_0 \varepsilon^\gamma, 3\ell_0 \varepsilon^\gamma]\big\}, \
\\[2pt]
\Omega^{(0)}_{\varepsilon, \gamma}:=\Omega^{(0)}_\varepsilon \bigcup \Big(\bigcup\nolimits_{i=1}^3 \Omega^{(i)}_\varepsilon \cap \{x\colon x_i\in [\varepsilon \ell_0,  2\ell_0 \varepsilon^\gamma]\}\Big).
\end{gather*}

Take any $M \in \Bbb N$ and $M >  \frac{3}{2}(1 - \lfloor \alpha\rfloor)$ and  denote by $\mathfrak{U}_{M}^{(\varepsilon)}$ the partial sum of $(\ref{asymp_expansion})$ (see Definition~\ref{Puiseux}).  Based on the properties of the coefficients of the series \eqref{regul+alfa} -- \eqref{exp-EVl+alfa} proved above, we have
 $$
 \mathfrak{U}_{M}^{(\varepsilon)}\big|_{t=0} = 0, \qquad \mathfrak{U}_{M}^{(\varepsilon)}\big|_{x_i=\ell_i} = q_i(t), \quad i\in \{1, 2, 3\}.
 $$

In $\Omega^{(1)}_{\varepsilon,3\ell_0,\gamma}$
\begin{align*}
  \mathfrak{U}_{M}^{(\varepsilon)}(x,t) = &  \
  \sum\limits_{k=0}^{M}  \varepsilon^{\alpha +k -1} \, \Big(w_{\alpha +k -1}^{(1)} (x_1,t) + u_{\alpha +k -1}^{(1)} \big( x_1, \tfrac{\overline{x}_1}{\varepsilon}, t\big)\Big)
  \\
   & \ +  \sum\limits_{k= 1}^{M +\lfloor \alpha\rfloor}   \varepsilon^{k -1} \, \Big(w_{k -1}^{(1)} (x_1,t) + u_{k -1}^{(1)} \big( x_1, \tfrac{\overline{x}_1}{\varepsilon}, t\big)\Big) ,
\end{align*}
and due to \eqref{lim_2} and \eqref{eq_2+} this partial sum satisfies the differential equation
\begin{equation}\label{Res_1}
\partial_t \mathfrak{U}_{M}^{(\varepsilon)} -  \varepsilon\, \Delta_x \mathfrak{U}_{M}^{(\varepsilon)} +
  \mathrm{div}_x \big( \overrightarrow{V_\varepsilon} \, \mathfrak{U}_{M}^{(\varepsilon)}\big)
=  \varepsilon^{M +\lfloor \alpha\rfloor -1}\, \mathcal{R}^{(1)}_{M +\lfloor \alpha\rfloor -1 } +  \varepsilon^{\alpha +M -1 }\, \mathcal{R}^{(1)}_{\alpha +M -1 } \quad \text{in} \ \ \Omega^{(1)}_{\varepsilon, \gamma}\times (0,T)
 \end{equation}
where
\begin{multline}\label{Res_1+}
\mathcal{R}^{(1)}_{\mathfrak{p} -1 }(x_1,\bar{\xi}_1, t) =
\Big(  v^{(1)}_1(x_1) \,  u^{(1)}_{\mathfrak{p} - 1}(x_1,\bar{\xi}_1, t)  \Big)^\prime
  +    \partial_t {u}_{\mathfrak{p}-1}^{(1)}(x_1,\bar{\xi}_1, t) -  \Big( u^{(1)}_{\mathfrak{p}-2}(x_1,\bar{\xi}_1, t) \Big)^{\prime\prime}
    \\
    +   \mathrm{div}_{\bar{\xi}_1} \Big( \overline{V}^{(1)}(x_1, \bar{\xi}_1) \,
            \big[ w^{(1)}_{\mathfrak{p}-1}(x_1, t) + u^{(1)}_{\mathfrak{p}-1}(x_1,\bar{\xi}_1, t) \big]\Big)
            -  \varepsilon \Big( u^{(1)}_{\mathfrak{p}-1}(x_1,\bar{\xi}_1, t) \Big)^{\prime\prime}
\end{multline}
for $\mathfrak{p} \in \{M +\lfloor \alpha\rfloor, \  \alpha +M \}.$
Thanks to our assumptions,
\begin{equation}\label{Res_2-}
  \sup_{\Omega^{(i)} _{\varepsilon,\gamma}\times (0,T)} |\mathcal{R}^{(1)}_{\mathfrak{p} -1 }(x_1,\tfrac{\bar{x}_1}{\varepsilon}, t)| \le C^{(1)}_M,
\end{equation}
where the constant $C^{(1)}_M$ is independent of $\varepsilon.$
\begin{remark}\label{rem_const}
Hereinafter, all constants in inequalities are independent of the parameter~$\varepsilon.$
\end{remark}

In $\Omega^{(2)}_{\varepsilon,3\ell_0,\gamma}$ and $\Omega^{(3)}_{\varepsilon,3\ell_0,\gamma}$ the partial sum $\mathfrak{U}_{M}^{(\varepsilon)}$
additionally contains the partial sum of the  boundary-layer ansatz (see \eqref{asymp_expansion}). Therefore, residuals from this partial sum in the corresponding differential equation are the sum of the terms $\sum_{\mathfrak{p} \in \{M +\lfloor \alpha\rfloor, \  \alpha +M \}}  \varepsilon^{\mathfrak{p} -1 } \mathcal{R}^{(i)}_{\mathfrak{p} +M -1 },$ where $\mathcal{R}^{(i)}_{\mathfrak{p} +M -1 }$ is estimated in the same way as in \eqref{Res_2-} $(i\in \{2,3\}),$ and
\begin{multline*}
\chi_\delta^{(i)} (x_i)\sum_{\mathfrak{p} \in \{M +\lfloor \alpha\rfloor, \, \alpha +M \}}  \varepsilon^{\mathfrak{p} -1 } \,  \partial_t \Pi_{\mathfrak{p} -1}^{(i)}
   \\
  +\sum\limits_{k=0}^{M}  \varepsilon^{\alpha +k -1 } \Big( 2 \,  \big(\chi_\delta^{(i)}\big)'  \partial_{\xi_i} \Pi_{\alpha +k -1 }^{(i)}   + v_i^{(i)}(\ell_i)  \,  \big(\chi_\delta^{(i)}\big)'  \,  \Pi_{\alpha +k -1 }^{(i)} - \varepsilon \, \big(\chi_\delta^{(3)}\big)^{\prime\prime}  \Pi_{\alpha +k -1 }^{(i)}\Big)
  \\
  + \sum\limits_{k= 1}^{M +\lfloor \alpha\rfloor}   \varepsilon^{k -1} \Big( 2 \,  \big(\chi_\delta^{(i)}\big)'  \partial_{\xi_i} \Pi_{k -1 }^{(i)}   + v_i^{(i)}(\ell_i)  \,  \big(\chi_\delta^{(i)}\big)'  \,  \Pi_{k -1 }^{(i)} - \varepsilon \, \big(\chi_\delta^{(3)}\big)^{\prime\prime}  \Pi_{k -1 }^{(i)}\Big).
\end{multline*}
The supports of summands in the second and third  lines coincide with $\mathrm{supp}\big(\big(\chi_\delta^{(i)}\big)'\big),$ where
the functions $\{\Pi_{p\alpha +k -1 }^{(i)}\}$ exponentially small as $\varepsilon$ tends to zero (see \eqref{exp_decay}). Therefore, residuals from $\mathfrak{U}_{M}^{(\varepsilon)}$ in the differential equation in $\Omega^{(i)}_{\varepsilon,3\ell_0,\gamma}$ are also of  order
$\mathcal{O}\big(\varepsilon^{M +\lfloor \alpha\rfloor -1}\big) + \mathcal{O}\big(\varepsilon^{\alpha +M -1 }\big)$
for sufficiently small $\varepsilon.$

Using \eqref{bc_2} and taking the zero Neumann boundary condition for the solutions $\{\Pi_{p\alpha +k -1 }^{(i)}\}$ (see \eqref{prim+probl+k}) into account, we have
\begin{equation}\label{Res_3+}
-  \varepsilon \,  \partial_{\boldsymbol{\nu}_\varepsilon} \mathfrak{U}_{M}^{(\varepsilon)} +  \mathfrak{U}_{M}^{(\varepsilon)} \, \overrightarrow{V_\varepsilon}\boldsymbol{\cdot}\boldsymbol{\nu}_\varepsilon = \varepsilon^{\alpha}  \varphi^{(i)}_\varepsilon + \sum_{\mathfrak{p} \in \{M +\lfloor \alpha\rfloor, \,  \alpha +M \}} \varepsilon^{\mathfrak{p}}\,  \Phi_{\mathfrak{p}}^{(i)} \quad \text{on} \ \
  \Gamma^{(i)}_{\varepsilon, \gamma}\times (0, T),
\end{equation}
where  the lateral surfaces $\Gamma^{(i)}_{\varepsilon, \gamma} := \Gamma^{(i)} _\varepsilon \cap \{x\colon x_i \in [3 \ell_0 \varepsilon^\gamma, \ell_i) \},$ $i\in \{1, 2, 3\},$
\begin{equation}\label{Phi_p}
\Phi_{\mathfrak{p}}^{(i)}(x_i,\bar{\xi}_i, t) := \big( w^{(i)}_{\mathfrak{p} -1} + u^{(i)}_{\mathfrak{p} -1} \big) \, \overline{V}^{(i)} \boldsymbol{\boldsymbol{\cdot}} \bar{\nu}_{\bar{\xi}_i},
\end{equation}
and  there are positive constants $\varepsilon_0$ and $\tilde{C}^{(i)}_M$ such that for all $\varepsilon \in (0, \varepsilon_0)$
\begin{equation}\label{Res_5}
  \sup_{\Gamma^{(i)}_{\varepsilon, \gamma} \times (0,T)} |\Phi_{\mathfrak{p} +M}^{(i)}(x_i,\tfrac{\bar{x}_i}{\varepsilon}, t)| \le \tilde{C}^{(i)}_M.
\end{equation}
In addition, the functions $\{\Phi_{\mathfrak{p}}^{(i)}\}_{i=1}^3$ vanish at circular strips on the lateral surfaces of the thin cylinders near their bases $\{\Upsilon_{\varepsilon}^{(i)} (\ell_i)\}_{i=1}^3$, since the functions  $\{\overline{V}^{(i)}\}$  and  $\{\varphi_\varepsilon^{(i)}\}$ vanish there.

In virtue of \eqref{N_p_alpha +k_prob}, we get
\begin{equation}\label{Res_2}
\partial_t \mathfrak{U}_{M}^{(\varepsilon)} -  \varepsilon\, \Delta_x \mathfrak{U}_{M}^{(\varepsilon)} +
  \mathrm{div}_x \big( \overrightarrow{V_\varepsilon} \, \mathfrak{U}_{M}^{(\varepsilon)}\big)
=  -\sum_{\mathfrak{p} \in \{M +\lfloor \alpha\rfloor, \,  \alpha +M \}}   \varepsilon^{\mathfrak{p} -1 } \partial_t {{N}}_{\mathfrak{p} -1} \quad \text{in} \ \ \Omega^{(0)}_{\varepsilon, \gamma} \times (0,T),
 \end{equation}
and
\begin{equation}\label{res_bc}
-  \varepsilon \,  \partial_{\boldsymbol{\nu}_\varepsilon} \mathfrak{U}_{M}^{(\varepsilon)} =  \varepsilon^{\alpha}  \varphi^{(0)}_\varepsilon
\quad
   \text{on} \ \Big(\partial\Omega^{(0)}_{\varepsilon, \gamma} \setminus \Big\{
 \bigcup_{i=1}^3 \overline{\Upsilon_\varepsilon^{(i)} (2\ell_0 \varepsilon^\gamma)}
\Big\} \Big) \times (0, T).
\end{equation}

Now it remains to calculate and estimate residuals left by $\mathfrak{U}_{M}^{(\varepsilon)}$  in the differential equations in the
thin and small cylinders $\Omega^{(i)}_{\varepsilon,2\ell_0,3\ell_0,\gamma },$ $i\in \{1, 2, 3\},$
and in the boundary conditions on their  lateral surfaces. Since $\varphi^{(i)}_\varepsilon,\,  \overline{V}^{(i)}$ vanish there,
\begin{equation}\label{Res_8}
-  \varepsilon \,  \partial_{\boldsymbol{\nu}_\varepsilon} u_\varepsilon +  u_\varepsilon \, \overrightarrow{V_\varepsilon}\boldsymbol{\cdot}\boldsymbol{\nu}_\varepsilon =  0
 \end{equation}
on the corresponding lateral surface of $\Omega^{(i)}_{\varepsilon,2\ell_0,3\ell_0,\gamma }.$
Similar to \eqref{Res_1} and \eqref{Res_2}, but now taking into account Remark~\ref{r_2_1}, we find
\begin{multline}\label{Res_3}
\partial_t \mathfrak{U}_{M}^{(\varepsilon)} -  \varepsilon\, \Delta_x \mathfrak{U}_{M}^{(\varepsilon)} +
  \mathrm{div}_x \big( \overrightarrow{V_\varepsilon} \, \mathfrak{U}_{M}^{(\varepsilon)}\big)
=  -
\sum_{\mathfrak{p} \in \{M +\lfloor \alpha\rfloor, \,  \alpha +M \}}   \varepsilon^{\mathfrak{p} -1 }
\Big[\varepsilon \chi_{\ell_0}\big(\frac{x_i}{\varepsilon^\gamma}\big) \big( w^{(i)}_{\mathfrak{p} -1}(x_i,t)\big)^{\prime\prime}
+ \big(1- \chi_{\ell_0}\big(\frac{x_i}{\varepsilon^\gamma}\big)\big) \partial_t {{N}}_{\mathfrak{p} -1}\Big]
\\
- \chi''_{\ell_0}\big(\frac{x_i}{\varepsilon^\gamma}\big) \sum\limits_{k=1}^{M +\lfloor \alpha\rfloor}  \varepsilon^{k -2 \gamma}
\Big(w_{k -1}^{(i)} (x_i) - N_{k -1}\Big)  + \mathrm{v}_i \, \chi'_{\ell_0}\big(\frac{x_i}{\varepsilon^\gamma}\big) \sum\limits_{k=1}^{M +\lfloor \alpha\rfloor}  \varepsilon^{k -1-\gamma} \Big(w_{k -1}^{(i)} (x_i) - N_{k -1}\Big)
\\
- 2 \chi'_{\ell_0}\big(\frac{x_i}{\varepsilon^\gamma}\big) \sum\limits_{k=1}^{M +\lfloor \alpha\rfloor}  \varepsilon^{k -\gamma}
\Big(\partial_{x_i}w_{k -1}^{(i)} (x_i) - \varepsilon^{-1} \partial_{\xi_i} N_{k -1}\Big)
\\
- \chi''_{\ell_0}\big(\frac{x_i}{\varepsilon^\gamma}\big) \sum\limits_{k=0}^{M}  \varepsilon^{\alpha +k -2 \gamma}
\Big(w_{\alpha +k -1}^{(i)} (x_i) - N_{\alpha +k -1}\Big) + \mathrm{v}_i \, \chi'_{\ell_0}\big(\frac{x_i}{\varepsilon^\gamma}\big) \sum\limits_{k=0}^{M}   \varepsilon^{\alpha +k -1-\gamma}
\Big(w_{\alpha +k -1}^{(i)} (x_i) - N_{\alpha +k -1}\Big)
\\
- 2 \chi_{\ell_0}'\big(\frac{x_i}{\varepsilon^\gamma}\big) \sum\limits_{k=0}^{M}  \varepsilon^{\alpha +k -\gamma}
\Big(\partial_{x_i}w_{\alpha +k -1}^{(i)} (x_i) - \varepsilon^{-1} \partial_{\xi_i} N_{\alpha +k -1}\Big)
 \quad \text{in} \ \
\Omega^{(i)}_{\varepsilon,2\ell_0,3\ell_0,\gamma }.
 \end{multline}
The terms in the first line of the right-hand side of \eqref{Res_3} are of order
$\mathcal{O}\big(\varepsilon^{M +\lfloor \alpha\rfloor -1}\big) + \mathcal{O}\big(\varepsilon^{\alpha +M -1 }\big).$
 The rest of the terms are localized in the support of $\chi'_{\ell_0}\big(\frac{x_i}{\varepsilon^\gamma}\big)$. Therefore, using the Taylor formula for the functions $\{w_{p\alpha +k -1}^{(i)}\}$ at the point $x_i=0$ and the formula \eqref{new-solution_p_alpha +k}, the summands  in the other lines of \eqref{Res_3} can be rewritten as follows
\begin{gather*}
\chi''_{\ell_0}\big(\tfrac{x_i}{\varepsilon^\gamma}\big) \sum\limits_{k=1}^{ M +\lfloor \alpha\rfloor}  \varepsilon^{k -2 \gamma}
\widetilde{N}_{k -1} + \mathcal{O}\big(\varepsilon^{\gamma(M +\lfloor \alpha\rfloor - 2)+1}\big)
 - \mathrm{v}_i \, \chi'_{\ell_0}\big(\tfrac{x_i}{\varepsilon^\gamma}\big) \sum\limits_{k=1}^{M +\lfloor \alpha\rfloor}   \varepsilon^{k -1-\gamma} \widetilde{N}_{k -1} + \mathcal{O}\big(\varepsilon^{\gamma (M +\lfloor \alpha\rfloor -1)}\big)
\\
   + 2 \chi'_{\ell_0}\big(\tfrac{x_i}{\varepsilon^\gamma}\big) \sum\limits_{k=0}^{M +\lfloor \alpha\rfloor - 1} \varepsilon^{k-\gamma} \partial_{\xi_i}\widetilde{N}_{k}
  + \mathcal{O}\big(\varepsilon^{\gamma (M +\lfloor \alpha\rfloor -2)+1}\big)
 \\
\chi''_{\ell_0}\big(\tfrac{x_i}{\varepsilon^\gamma}\big) \sum\limits_{k=0}^{M}  \varepsilon^{\alpha +k -2 \gamma}
\widetilde{N}_{\alpha +k -1} +  \mathcal{O}\Big(\varepsilon^{\alpha + \gamma M -\gamma}\Big)
 - \mathrm{v}_i \, \chi'_{\ell_0}\big(\tfrac{x_i}{\varepsilon^\gamma}\big) \sum\limits_{k=0}^{M}  \varepsilon^{\alpha +k -1-\gamma} \widetilde{N}_{\alpha +k -1} + \mathcal{O}\Big(\varepsilon^{\alpha + \gamma M -1}\Big).
 \\
  + 2 \chi'_{\ell_0}\big(\tfrac{x_i}{\varepsilon^\gamma}\big) \sum\limits_{k=-1}^{M-1} \varepsilon^{k-\gamma} \partial_{\xi_i}\widetilde{N}_{\alpha +k}
  +  \mathcal{O}\Big(\varepsilon^{\alpha + \gamma M -\gamma}\Big).
 \end{gather*}
Taking into account  \eqref{new-solution_p_alpha +k} and \eqref{rem_exp-decrease}, the  maximum of $|\widetilde{N}_\mathfrak{m}|$ and $|\partial_{\xi_i}\widetilde{N}_\mathfrak{m}|$ over
$$
 \big(\Omega_\varepsilon^{(i)} \cap \big\{ x: \  x_i\in  [2\ell_0\varepsilon^\gamma, 3\ell_0\varepsilon^\gamma] \big\} \big) \times [0, T]
 $$
 are of order $\exp\big(-\beta_0 2 \ell_0 \, \varepsilon^{\gamma -1}\big),$ i.e.,
these terms exponentially decrease as the parameter $\varepsilon$ tends to zero.  Thus,  the right-hand side of \eqref{Res_3} is of order
$\mathcal{O}\big(\varepsilon^{\gamma (M +\lfloor \alpha\rfloor -1)}\big) + \mathcal{O}\big(\varepsilon^{\alpha + \gamma M -1}\big),$
 and these values are  infinitesimal as $\varepsilon \to 0,$ since $M \in \Bbb N,$ $ M >   \frac{3}{2}(1 - \lfloor \alpha\rfloor),$ and
  $\gamma\in (\frac23, 1).$ In addition, if $\gamma > 1 - \frac{\alpha - \lfloor \alpha\rfloor }{1 -\lfloor \alpha\rfloor},$ then
$\gamma (M +\lfloor \alpha\rfloor -1) < \alpha + \gamma M -1,$ and therefore,
$\mathcal{O}\big(\varepsilon^{\gamma (M +\lfloor \alpha\rfloor -1)}\big) + \mathcal{O}\big(\varepsilon^{\alpha + \gamma M -1}\big)=
\mathcal{O}\big(\varepsilon^{\gamma (M +\lfloor \alpha\rfloor -1)}\big).$
In what follows, we consider  the parameter
\begin{equation}\label{gamma_cond}
 \gamma \in \left( \max\Big\{\frac{2}{3} , \  1 - \frac{\alpha - \lfloor \alpha\rfloor }{1 -\lfloor \alpha\rfloor}\Big\} , \ 1 \right).
\end{equation}
It should be noted here that $ 1 - \frac{\alpha - \lfloor \alpha\rfloor }{1 -\lfloor \alpha\rfloor} < 1$ and
$
\lim_{\alpha \to -\infty} \left( 1 - \frac{\alpha - \lfloor \alpha\rfloor }{1 -\lfloor \alpha\rfloor} \right) = 1.
$

\begin{remark}\label{Rem3-6}
As was noted in Remark~\ref{rem_const}, constants in inequalities are independent of $\varepsilon,$ but they depend on the value of the parameters  $M,$ $\alpha,$ $\gamma,$ and $i$. In what follow we indicate only the dependence of $M$ and use the same notation $C_M$ for all constants.
\end{remark}

\begin{theorem}\label{Th_1}
Let  assumptions made in Section~\ref{Sec:Statement},  in  Remark~\ref{additional} and \eqref{gamma_cond}  hold. Then the series \eqref{asymp_expansion} is the asymptotic expansion for the solution $u_\varepsilon$  to the problem~\eqref{probl} in both the Banach space   $C(\overline{\Omega}_\varepsilon\times [0,T])$ and the Sobolev space $L^2\big((0,T); H^1(\Omega_\varepsilon)\big);$ and for  any $M\in \Bbb N$ and  $ M >  \frac{3}{2}(1 - \lfloor \alpha\rfloor)$ there exist $C_M>0$ and $\varepsilon_0>0$ such that for all $\varepsilon\in(0, \varepsilon_0)$ the  estimates
$$
 \left\|u_\varepsilon - \mathfrak{U}_{P}^{(\varepsilon)}\right\|_{C(\overline{\Omega}_\varepsilon\times [0,T])} \le C_M \, \varepsilon^{\gamma (M +\lfloor \alpha\rfloor -1)}
$$
 and
 \begin{equation}\label{main_2+}
 \tfrac{1}{\sqrt{|\Omega_\varepsilon|}}\, \left\|\nabla_x(u_\varepsilon - \mathfrak{U}_{P+1}^{(\varepsilon)})\right\|_{L^2(\Omega_\varepsilon\times (0, T))}
    \le C_M \, \varepsilon^{\gamma (M +\lfloor \alpha\rfloor -1) - \frac12}
 \end{equation}
are satisfied,  where $\mathfrak{U}_{P}^{(\varepsilon)}$ is the partial sum of the series \eqref{asymp_expansion},   $P = \lfloor \gamma (M +\lfloor \alpha\rfloor -1) \rfloor +1 - \lfloor \alpha \rfloor,$ and $|\Omega_\varepsilon|$ is  the Lebesque measure of $\Omega_\varepsilon$.
\end{theorem}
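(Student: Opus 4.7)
The plan is to set $R_\varepsilon := u_\varepsilon - \mathfrak{U}_{P}^{(\varepsilon)}$ (respectively $R_\varepsilon := u_\varepsilon - \mathfrak{U}_{P+1}^{(\varepsilon)}$ for the energy bound) and reduce to an a priori estimate for a problem of the same type as \eqref{probl}. From the construction in Section~\ref{Sec_Linear} the partial sum automatically satisfies the zero initial condition and the Dirichlet condition $q_i(t)$ on $\Upsilon_\varepsilon^{(i)}(\ell_i)$, while the residuals in the PDE and in the Neumann-type boundary condition on $\Gamma_\varepsilon^{(i)}$ are precisely the ones already collected in \eqref{Res_1}--\eqref{Res_3}, \eqref{Res_3+}, \eqref{res_bc} and \eqref{Res_8}. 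The first step is therefore bookkeeping: assemble these residuals piece by piece over the sub-domains $\Omega^{(i)}_{\varepsilon,3\ell_0,\gamma}$, $\Omega^{(0)}_{\varepsilon,\gamma}$ and $\Omega^{(i)}_{\varepsilon,2\ell_0,3\ell_0,\gamma}$, and check that after choosing $P = \lfloor \gamma(M+\lfloor\alpha\rfloor -1)\rfloor +1 -\lfloor\alpha\rfloor$ the worst-case order $\mathcal{O}\big(\varepsilon^{\gamma(M+\lfloor\alpha\rfloor-1)}\big)$ in the equation and $\mathcal{O}\big(\varepsilon^{\gamma(M+\lfloor\alpha\rfloor-1)+1}\big)$ on the lateral boundary (one $\varepsilon$ saved because the boundary condition in \eqref{probl} carries the factor $\varepsilon^\alpha$ not $\varepsilon^{\alpha-1}$) is indeed what remains, using that the condition \eqref{gamma_cond} ensures $\gamma(M+\lfloor\alpha\rfloor-1) \le \alpha + \gamma M -1$ and thus collapses the two Puiseux branches into a single leading power.

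For the energy estimate I would test the PDE satisfied by $R_\varepsilon$ against $R_\varepsilon$ itself, integrate over $\Omega_\varepsilon \times (0,\tau)$ and rewrite the convective term through the identity $\mathrm{div}_x(\overrightarrow{V_\varepsilon}R_\varepsilon)\,R_\varepsilon = \tfrac12\mathrm{div}_x(\overrightarrow{V_\varepsilon}R_\varepsilon^2) + \tfrac12(\mathrm{div}_x \overrightarrow{V_\varepsilon})R_\varepsilon^2$. The divergence part produces boundary integrals on $\Gamma_\varepsilon^{(i)}$ that combine with the Neumann condition to give a non-negative contribution from the "outflow" portion together with controllable terms from the residual inhomogeneities; on the Dirichlet base $R_\varepsilon$ vanishes, and on the inflow $\Upsilon_\varepsilon^{(1)}(\ell_1)$ the sign of $\overrightarrow{V_\varepsilon}\cdot\boldsymbol{\nu}_\varepsilon$ is favourable. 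After applying Cauchy--Schwarz and Young's inequality on the right-hand side and using $|\Omega_\varepsilon| \sim \varepsilon^2$ and $|\Gamma_\varepsilon^{(i)}|\sim \varepsilon$ to estimate $L^2$-norms of the residuals by their sup-norms times $\sqrt{|\Omega_\varepsilon|}$, Gronwall's inequality delivers $\|R_\varepsilon\|_{L^\infty(0,T;L^2(\Omega_\varepsilon))}^2 + \varepsilon\,\|\nabla R_\varepsilon\|_{L^2(\Omega_\varepsilon\times(0,T))}^2 \le C_M\,|\Omega_\varepsilon|\,\varepsilon^{2\gamma(M+\lfloor\alpha\rfloor-1)}$, which after dividing by $|\Omega_\varepsilon|$ and extracting $\varepsilon^{-1/2}$ from the diffusion coefficient yields the claimed bound \eqref{main_2+}.

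For the uniform pointwise estimate I would invoke the weak maximum principle for the convection-diffusion operator $\partial_t - \varepsilon\Delta_x + \mathrm{div}_x(\overrightarrow{V_\varepsilon}\cdot)$ together with an explicit barrier function. Writing $R_\varepsilon^\pm$ for the positive/negative parts, I would construct barriers of the form $\Phi_\varepsilon(x,t) = A_M\,\varepsilon^{\gamma(M+\lfloor\alpha\rfloor-1)}\,(1+e^{\kappa t})$ with $\kappa$ chosen so that $\partial_t \Phi_\varepsilon - \varepsilon\Delta_x \Phi_\varepsilon + \mathrm{div}_x(\overrightarrow{V_\varepsilon}\Phi_\varepsilon) $ dominates the bulk residual and so that the boundary flux $-\varepsilon\partial_{\boldsymbol{\nu}_\varepsilon}\Phi_\varepsilon + \Phi_\varepsilon\,\overrightarrow{V_\varepsilon}\cdot\boldsymbol{\nu}_\varepsilon$ dominates the boundary residual (whose magnitude is smaller because of the extra factor $\varepsilon^{\alpha}$ versus $\varepsilon^{\alpha-1}$ built into the expansion). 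The comparison principle (valid thanks to the homogeneous Dirichlet and initial data of $R_\varepsilon$) then yields $|R_\varepsilon| \le \Phi_\varepsilon$ pointwise, which is exactly the asserted sup bound.

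The main obstacle I anticipate is the interplay between the two distinct powers $\varepsilon^{k-1}$ and $\varepsilon^{\alpha+k-1}$ in the Puiseux ansatz when estimating the residuals in the matching cylinders $\Omega^{(i)}_{\varepsilon,2\ell_0,3\ell_0,\gamma}$. One has to argue, using \eqref{rem_exp-decrease} and the Taylor expansion of $w^{(i)}_{p\alpha+k-1}$ at $x_i=0$ together with the structure \eqref{new-solution_p_alpha +k}, that every term generated by the cut-off $\chi_{\ell_0}(x_i/\varepsilon^\gamma)$ is either of order $\mathcal{O}(\varepsilon^{\gamma(M+\lfloor\alpha\rfloor-1)})$ or exponentially small of order $\exp(-\beta_0\ell_0\varepsilon^{\gamma-1})$; the condition \eqref{gamma_cond} is exactly what is needed to subsume the $\alpha$-shifted branch into the integer branch. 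A secondary technical difficulty is justifying that the maximum principle may be used in the present mixed Dirichlet/Robin-with-convection setting on a domain shrinking into a graph; here the sign condition $v^{(1)}_1<0$, $v^{(i)}_i>0$ ($i=2,3$) and the regularity hypotheses of Remark~\ref{additional} are essential to guarantee classical solvability and the comparison principle.
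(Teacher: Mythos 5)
Your overall architecture coincides with the paper's proof: the same residual bookkeeping over $\Omega^{(i)}_{\varepsilon,3\ell_0,\gamma}$, $\Omega^{(0)}_{\varepsilon,\gamma}$ and the matching cylinders $\Omega^{(i)}_{\varepsilon,2\ell_0,3\ell_0,\gamma}$ (with \eqref{gamma_cond} collapsing the two Puiseux branches into the single power $\varepsilon^{\gamma(M+\lfloor\alpha\rfloor-1)}$, and the truncation to $\mathfrak{U}_P^{(\varepsilon)}$ justified by the terms of the longer sum being of lower order), a maximum principle for the sup bound, and an energy identity obtained by testing with the difference for the gradient bound. Two remarks on where you deviate. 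First, for the energy estimate the paper does not use Gronwall: having already established the uniform bound \eqref{for_main_1}, it simply estimates $\|U_\varepsilon\|_{L^2}$ by $\sqrt{|\Omega_\varepsilon|}\,\|U_\varepsilon\|_{C}$ and absorbs half of $\|\nabla_x U_\varepsilon\|^2$ after handling the convective term exactly as you describe (incompressibility in the node, $\tfrac12\partial_{x_i}v^{(i)}_i$ in the cylinders, Dirichlet traces on the bases). Your Gronwall route is equally valid but does more work than necessary once the sup bound is in hand.

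Second, and this is the one substantive issue: the paper does not construct a barrier but imports the maximum principle from Lemma~5.1 of \cite{Mel-Roh_preprint-2022}, which is tailored to the flux boundary condition $-\varepsilon\partial_{\boldsymbol{\nu}_\varepsilon}u + u\,\overrightarrow{V_\varepsilon}\boldsymbol{\cdot}\boldsymbol{\nu}_\varepsilon = g$ in a thin graph-like junction. The spatially constant barrier $\Phi_\varepsilon = A_M\varepsilon^{\gamma(M+\lfloor\alpha\rfloor-1)}(1+e^{\kappa t})$ you propose does not close the argument on the lateral surfaces: there $-\varepsilon\partial_{\boldsymbol{\nu}_\varepsilon}\Phi_\varepsilon = 0$ while $\Phi_\varepsilon\,\overrightarrow{V_\varepsilon}\boldsymbol{\cdot}\boldsymbol{\nu}_\varepsilon = \varepsilon\,\Phi_\varepsilon\,\overline{V}^{(i)}\boldsymbol{\cdot}\bar{\nu}_{\bar{\xi}_i}$ is sign-indefinite and of order $\varepsilon^{1+\gamma(M+\lfloor\alpha\rfloor-1)}$, which (since $\gamma<1$) dominates the boundary residual of order $\varepsilon^{M+\lfloor\alpha\rfloor}$; so the required inequality for a supersolution can fail on part of $\Gamma_\varepsilon^{(i)}$. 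One must either use a barrier with a transverse corrector compensating $\overline{V}^{(i)}\boldsymbol{\cdot}\bar{\nu}_{\bar{\xi}_i}$, or simply invoke the ready-made comparison result of the companion paper, as the authors do. With that replacement your proof matches the paper's.
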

\begin{proof} {\bf 1.} From calculations above it follows that the partial sum $\mathfrak{U}_{M}^{(\varepsilon)}$ leaves  the biggest residuals in  $\Omega^{(i)}_{\varepsilon,2\ell_0,3\ell_0,\gamma },$ $i\in \{1, 2, 3\}.$
Therefore, taking into account  the  estimates of residuals carried out in this section, the difference between the partial sum $\mathfrak{U}_{M}^{(\varepsilon)}$ of (\ref{asymp_expansion}) and the solution to the problem \eqref{probl} satisfies the following relations:
\begin{equation}\label{dif_1}
  \partial_t(\mathfrak{U}_{M}^{(\varepsilon)} - u_\varepsilon)   -  \varepsilon\, \Delta_x( \mathfrak{U}_{M}^{(\varepsilon)} - u_\varepsilon) +
  \mathrm{div}_x \big( \overrightarrow{V_\varepsilon}^{(i)} \, (\mathfrak{U}_{M}^{(\varepsilon)} - u_\varepsilon)\big)
   = \varepsilon^{\gamma (M +\lfloor \alpha\rfloor -1)} \mathcal{R}^{(i)}_{\gamma (M +\lfloor \alpha\rfloor -1)} \quad\text{in}  \ \Omega_\varepsilon^{(i)}\times (0,T),
\end{equation}
$$
-  \varepsilon \,  \partial_{\boldsymbol{\nu}_\varepsilon}(\mathfrak{U}_{M}^{(\varepsilon)} - u_\varepsilon) +  (\mathfrak{U}_{M}^{(\varepsilon)} - u_\varepsilon)\, \overrightarrow{V_\varepsilon}\boldsymbol{\cdot}\boldsymbol{\nu}_\varepsilon  =
\sum_{\mathfrak{p} \in \{M +\lfloor \alpha\rfloor, \,  \alpha +M \}} \varepsilon^{\mathfrak{p}}\,  \Phi_{\mathfrak{p}}^{(i)} \quad \text{on} \ \
  \Gamma^{(i)}_{\varepsilon, \gamma}\times (0, T),
$$
$$
(\mathfrak{U}_{M}^{(\varepsilon)} - u_\varepsilon)\big|_{x_i= \ell_i}
 =  0 \quad  \text{on} \ \Upsilon_{\varepsilon}^{(i)} (\ell_i)\times (0,T),
\ \ i\in\{1,2,3\},
$$
\begin{equation}\label{dif_2}
\partial_t(\mathfrak{U}_{M}^{(\varepsilon)} - u_\varepsilon) -  \varepsilon\, \Delta_x (\mathfrak{U}_{M}^{(\varepsilon)} - u_\varepsilon) +
   \overrightarrow{V_\varepsilon}^{(0)} \cdot \nabla_x(\mathfrak{U}_{M}^{(\varepsilon)} - u_\varepsilon)  =
    -\sum_{\mathfrak{p} \in \{M +\lfloor \alpha\rfloor, \,  \alpha +M \}}   \varepsilon^{\mathfrak{p} -1 } \mathcal{R}^{(0)}_{\mathfrak{p}-1} \quad \text{in} \ \ \Omega^{(0)}_{\varepsilon, \gamma} \times (0,T),
\end{equation}
$$
 -  \varepsilon \,  \partial_{\boldsymbol{\nu}_\varepsilon}(\mathfrak{U}_{M}^{(\varepsilon)} - u_\varepsilon)  = 0  \quad
    \text{on} \ \ \Gamma_\varepsilon^{(0)}\times (0,T),
$$
$$
  (\mathfrak{U}_{M}^{(\varepsilon)} - u_\varepsilon)\big|_{t=0}  =  0 \quad  \text{on} \ \Omega_{\varepsilon},
$$
where $\mathcal{R}^{(0)}_{\mathfrak{p}-1} = - \partial_t {{N}}_{\mathfrak{p} -1}$ for
$\mathfrak{p} \in \{M +\lfloor \alpha\rfloor, \,  \alpha +M \}, $ the residual $\Phi_{\mathfrak{p}}^{(i)}$ is determined in \eqref{Phi_p} and satisfied the estimate \eqref{Res_5},
$$
\sup_{\Omega^{(i)}_{\varepsilon} \times (0,T)} \big|\mathcal{R}^{(i)}_{\gamma (M +\lfloor \alpha\rfloor -1)}(x, t; \varepsilon)\big| \le C_M.
$$

From the maximum principle proved in \cite[Lemma 5.1]{Mel-Roh_preprint-2022} for parabolic problems in thin graph-like junctions,  we obtain
\begin{equation}\label{for_main_1}
\|u_\varepsilon - \mathfrak{U}_{M}^{(\varepsilon)}\|_{C(\overline{\Omega}_\varepsilon\times [0,T])} :=  \max_{\overline{\Omega_\varepsilon}\times [0, T]} |u_\varepsilon - \mathfrak{U}_{M}^{(\varepsilon)}|  \le C_M \,
   \varepsilon^{\gamma (M +\lfloor \alpha\rfloor -1)}.
 \end{equation}
 Since $M$ is an arbitrary natural number and  $ M >  \frac{3}{2}(1 - \lfloor \alpha\rfloor)$, the inequality  \eqref{for_main_1} means, based on Definition~\ref{Puiseux}, that the series \eqref{asymp_expansion} is the asymptotic expansion of the solution $u_\varepsilon$  in   $C(\overline{\Omega}_\varepsilon\times [0,T]).$

 The partial sum $\mathfrak{U}_{M}^{(\varepsilon)}$ contains terms that are infinitesimal with respect to $\varepsilon^{\gamma (M +\lfloor \alpha\rfloor -1)}$ for $\varepsilon \to 0$. Therefore,  from  \eqref{for_main_1} it follows the inequality
  \begin{equation}\label{main_2}
   \left\|u_\varepsilon - \mathfrak{U}_{P}^{(\varepsilon)}\right\|_{C(\overline{\Omega}_\varepsilon\times [0,T])} \le C_M \, \varepsilon^{\gamma (M +\lfloor \alpha\rfloor -1)},
 \end{equation}
 where $P = \lfloor \gamma (M +\lfloor \alpha\rfloor -1) \rfloor +1 - \lfloor \alpha \rfloor$; it is easy to verify that $1 - \lfloor \alpha \rfloor \le P \le M -1.$

\medskip

{\bf 2.} We multiply the differential equations \eqref{dif_1} and \eqref{dif_2}  with  $U_\varepsilon := \mathfrak{U}_{M}^{(\varepsilon)} - u_\varepsilon$ and integrate them over the corresponding domain and over $ (0, \tau),$ where $\tau$ is an arbitrary number from $(0, T).$ Integrating by parts and taking into account the boundary conditions and the initial condition for $U_\varepsilon,$ \eqref{Res_2}, \eqref{Res_5},
and the volume of the domains being integrated over, we derive
 \begin{align}\label{s_1}
  \| {\nabla_x U_\varepsilon}\|^2_{L^2(\Omega_\varepsilon\times (0, \tau))}
\le & \
\frac{1}{\varepsilon} \int\limits_{0}^{\tau} \bigg(\int_{\Omega^{(0)}_\varepsilon} U_\varepsilon\,
    \overrightarrow{V_\varepsilon} \cdot
    {\nabla_x U_\varepsilon} \, dx + \sum_{i=1}^{3}\int_{\Omega^{(i)}_\varepsilon} U_\varepsilon\,
    \overrightarrow{V_\varepsilon} \cdot
    {\nabla_x U_\varepsilon} \, dx\bigg) dt \notag
 \\
 & \ + C_M \,  \big(\varepsilon^{M +\lfloor \alpha\rfloor} + \varepsilon^{\gamma ( M +\lfloor \alpha\rfloor) +1} \big)\, \|U_\varepsilon \|_{C(\overline{\Omega}_\varepsilon\times [0,T])}.
   \end{align}

Owing to the assumptions for the vector field  $\overrightarrow{V_\varepsilon}$, in particular the incompressibleness  of $\overrightarrow{V_\varepsilon}^{(0)}$  in $\Omega^{(0)}_\varepsilon $
and the Dirichlet conditions for $U_\varepsilon$ on $\{\Upsilon_{\varepsilon}^{(i)} (\ell_i)\}_{i=1}^3,$
\begin{align}\label{incom}
\frac{1}{\varepsilon} \Big|\int_{\Omega^{(0)}_\varepsilon} U_\varepsilon\,
    \overrightarrow{V_\varepsilon} \cdot
    {\nabla_x U_\varepsilon} \, dx & \ + \, \sum_{i=1}^{3}\int_{\Omega^{(i)}_\varepsilon} U_\varepsilon\,
    \overrightarrow{V_\varepsilon} \cdot
    {\nabla_x U_\varepsilon} \, dx\Big| \notag
 \\
 &  = \, \frac{1}{\varepsilon}\,  \Big| - \frac{1}{2}\sum_{i=1}^{3}\int_{\Omega^{(i)}_\varepsilon} U^2_\varepsilon  \, \partial_{x_i}v^{(i)}_i  \, dx
 + \varepsilon \sum_{i=1}^{3}\int_{\Omega^{(i)}_\varepsilon} U_\varepsilon\,
    \overline{V}^{(i)} \cdot {\nabla_{\bar{x}_i} U_\varepsilon} \, dx\Big|  \notag
 \\
& \le  \, C  \varepsilon \,  \|U_\varepsilon \|^2_{C(\overline{\Omega}_\varepsilon\times [0,T])} +  \frac{1}{2}  \, \| {\nabla_x U_\varepsilon}\|^2_{L^2(\Omega_\varepsilon\times (0, \tau))}
\end{align}

Using \eqref{for_main_1}, we derive from \eqref{s_1} and \eqref{incom} the inequality
\begin{equation}\label{ineq_1}
\left\| \nabla_x(u_\varepsilon - \mathfrak{U}_{M}^{(\varepsilon)}) \right\|_{L^2(\Omega_\varepsilon\times (0, T))} \le C_M \,
   \varepsilon^{\gamma (M +\lfloor \alpha\rfloor -1) + \frac12} .
\end{equation}

The inequality \eqref{ineq_1} implies that the series \eqref{asymp_expansion} is the  asymptotic expansion of the solution $u_\varepsilon$ in the Sobolev space $L^2\big ((0,T); H^1(\Omega_\varepsilon)\big).$
It should be emphasized that $\|u \|_{L^2(\Omega_\varepsilon)} \le C \|\nabla_x u \|_{L^2(\Omega_\varepsilon)}$
for any function  from the Sobolev space $H^1(\Omega_\varepsilon)$ whose traces
on the bases $\{\Upsilon_{\varepsilon}^{(i)}(\ell_i)\}_{i=1}^3$ are equal to zero (for more detail see \cite[Sec.~2]{M-AA-2021}).
From \eqref{ineq_1}  follows \eqref{main_2+}, with the rescaled $L^2$-norm on the left side.
\end{proof}

For applied problems it is not necessary to construct a complete asymptotic expansion of the solution. It is sufficient to approximate the solution to the required accuracy. Weaker assumptions about the smoothness of the coefficients and the given functions are then required.
For instance, let us consider case when $\alpha \in (0, 1).$ Then, writing \eqref{main_2} and \eqref{main_2+} for $M=2$, we get
 \begin{equation}\label{main_U_1}
   \left\|u_\varepsilon - \mathfrak{U}_{1}^{(\varepsilon)}\right\|_{C(\overline{\Omega}_\varepsilon\times [0,T])} \le C_2 \, \varepsilon^{\gamma}
 \end{equation}
 and
 \begin{equation}\label{main_U_2}
 \tfrac{1}{\sqrt{|\Omega_\varepsilon|}}\, \left\|\nabla_x(u_\varepsilon - \mathfrak{U}_{2}^{(\varepsilon)})\right\|_{L^2(\Omega_\varepsilon\times (0, T))}
    \le C_2 \, \varepsilon^{\gamma - \frac12}
 \end{equation}
 where $\gamma$ is a fixed number from the interval $\left( \max\big\{\frac{2}{3} , \  1 - \alpha \big\} , \ 1 \right).$
 The partial sum \begin{equation}\label{zero_app}
 \mathfrak{U}_{1}^{(\varepsilon)} =
\left\{
  \begin{array}{ll}
   \varepsilon^{\alpha -1} w_{\alpha -1}^{(1)} (x_1, t) + w_0^{(1)} (x_1, t) + \varepsilon^{\alpha} \big(w_{\alpha}^{(1)} (x_1, t) +
   u_{\alpha}^{(1)}( x_1, \tfrac{\overline{x}_1}{\varepsilon}, t)\big) , & x \in \Omega^{(1)}_{\varepsilon,3\ell_0,\gamma},
\\[4pt]
   \varepsilon^{\alpha -1} w_{\alpha -1}^{(i)} (x_i, t) + w_0^{(i)} (x_i, t) + \varepsilon^{\alpha} \big(w_{\alpha}^{(i)} (x_i, t) +
   u_{\alpha}^{(i)}( x_i, \tfrac{\overline{x}_i}{\varepsilon}, t)\big) &
   \\[2pt]
   + \chi_\delta^{(i)} (x_i)\big( \varepsilon^{\alpha -1} \Pi_{\alpha-1}^{(i)}\big(\tfrac{\ell_i - x_i}{\varepsilon}, t \big)   +  \Pi_{0}^{(i)}\big(\tfrac{\ell_i - x_i}{\varepsilon}, t \big) + \varepsilon^{\alpha} \Pi_{\alpha}^{(i)}\big),
    & x \in \Omega^{(i)}_{\varepsilon,3\ell_0,\gamma}, \ i\in\{2, 3\},
 \\[4pt]
 \varepsilon^{\alpha -1} N_{\alpha-1}\big(\frac{x}{\varepsilon}, t\big)  + N_0\big(\frac{x}{\varepsilon}, t\big) + \varepsilon^{\alpha} N_{\alpha}\big(\frac{x}{\varepsilon}, t\big), &x \in \Omega^{(0)}_{\varepsilon, \gamma},
\\[4pt]
\chi_{\ell_0}^{(i)}\left(\frac{x_i}{\varepsilon^\gamma}\right) \big(
\varepsilon^{\alpha -1} w_{\alpha -1}^{(i)} (x_i, t) + w_0^{(i)} (x_i, t) + \varepsilon^{\alpha} w_{\alpha}^{(i)} (x_i, t) \big) &
  \\[2pt]
  +
\big(1- \chi_{\ell_0}^{(i)}\left(\frac{x_i}{\varepsilon^\gamma}\right)\big) \left(
\varepsilon^{\alpha -1} N_{\alpha-1}\big(\frac{x}{\varepsilon}, t\big)  + N_0\big(\frac{x}{\varepsilon}, t\big) + \varepsilon^{\alpha} N_{\alpha}\big(\frac{x}{\varepsilon}, t\big)\right), &x \in
\Omega^{(i)}_{\varepsilon,2\ell_0,3\ell_0,\gamma }, \ i\in\{1,2,3\},
  \end{array}
\right.
\end{equation}
where the coefficients $\{w_{\alpha -1}^{(i)}\}_{i=1}^3$, $\{w_{0}^{(i)}\}_{i=1}^3$ and $\{w_{\alpha}^{(i)}\}_{i=1}^3$ form  classical solutions to the problems \eqref{limit_al-1}, \eqref{limit_0} and \eqref{limit_al}, respectively; the terms $\Pi_{\alpha-1}^{(i)},$
 $\Pi_{0}^{(i)},$ and $\Pi_{\alpha}$ are determined in \eqref{Pi_0};  and $N_{\alpha-1},$ $N_0,$ and $N_{\alpha}$ are solutions to the problem \eqref{N_p_alpha +k_prob}  for the corresponding values of the indices $p$ and $k.$

To obtain the estimate \eqref{main_U_1}, it is necessary to construct the partial sum $\mathfrak{U}_{2}^{(\varepsilon)}$ that additionally contains  the coefficients $\{w_{1}^{(i)}\}_{i=1}^3$,  $\{w_{\alpha+1}^{(i)}\}_{i=1}^3$, $\{u_{1}^{(i)}\}_{i=1}^3$ and $\{u_{\alpha +1}^{(i)}\}_{i=1}^3.$
This means that  $\{w_{0}^{(i)}\}_{i=1}^3,$ $\{w_{\alpha}^{(i)}\}_{i=1}^3$ and $\{w_{\alpha -1}^{(i)}\}_{i=1}^3$ must have  $C^2$ and $C^3$
smoothness, respectively, on the corresponding edges of the graph (see Remark~\ref{additional}).
To derive \eqref{main_U_2},  the partial sum $\mathfrak{U}_{3}^{(\varepsilon)}$ should be constructed and, as a result of which additional smoothness of the coefficients is required. Therefore, the following statements hold.

\begin{corollary}\label{Coro_3_1}
Let $\alpha \in (0,1)$ and,  in addition to the  assumptions made in Section~\ref{Sec:Statement}  the functions $\{\varphi^{(i)}\}_{i=1}^3$ belong to the smoothness class $C^4$ in their domains, $q_1 \in C^2([0,T]),$ and
$$
\partial_t \varphi^{(0)}\big|_{t=0}=\partial^2_{tt} \varphi^{(0)}\big|_{t=0}=0, \quad q''_1(0)=0.
$$

Then the inequality \eqref{main_U_1} hold.
\end{corollary}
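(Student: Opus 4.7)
The plan is to invoke Theorem~\ref{Th_1} with the minimal admissible choice $M=2$ (since $\lfloor\alpha\rfloor=0$ forces $M>3/2$) and to verify that the weaker smoothness hypotheses of the corollary still suffice to construct the required partial sum. For this value of $M$, the exponent in Theorem~\ref{Th_1} equals $\gamma(M+\lfloor\alpha\rfloor-1)=\gamma$ and the partial-sum index is $P=\lfloor\gamma\rfloor+1-0=1$, so the conclusion of the theorem coincides verbatim with \eqref{main_U_1}. The entire task therefore reduces to re-examining the recursive construction of Section~\ref{Sec_Linear} under bounded differentiability, and checking that the residual estimates of Section~\ref{justification} go through unchanged when only finitely many coefficients are built.

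The main work is to trace the smoothness chain inside $\mathfrak{U}_2^{(\varepsilon)}$, which contains the new coefficients $\{w_1^{(i)},w_{\alpha+1}^{(i)},u_1^{(i)},u_{\alpha+1}^{(i)}\}_{i=1}^3$ beyond those present in $\mathfrak{U}_1^{(\varepsilon)}$. Proceeding in increasing order of the exponent, I would first solve \eqref{limit_al-1} and \eqref{limit_0}: since $\widehat{\varphi}^{(i)}$ inherits regularity from $\varphi^{(i)}\in C^4$ and the extra matching conditions $\partial_t\varphi^{(0)}|_{t=0}=\partial_{tt}^2\varphi^{(0)}|_{t=0}=0$ and $q_1''(0)=0$ secure the second- and third-order compatibility at the corner $(\ell_1,0)$, the characteristic representation delivers $w_{\alpha-1}^{(i)}$ in class $C^3$ and $w_0^{(i)}$ in class $C^2$, exactly as demanded in the paragraph preceding the corollary. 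The differentiated quantities $(w_{\alpha-1}^{(i)})''$ and $(w_0^{(i)})''$ then feed as continuous sources into \eqref{limit_al+k} for $w_\alpha^{(i)},w_1^{(i)},w_{\alpha+1}^{(i)}$; the auxiliary two-dimensional Neumann problems \eqref{p_2}, \eqref{p_1} and \eqref{eq_2+} for $u_\alpha^{(i)},u_1^{(i)},u_{\alpha+1}^{(i)}$ become solvable once the corresponding graph equations are, and elliptic regularity in $\bar{\xi}_i$ produces solutions with jointly continuous derivatives in $(x_i,\bar{\xi}_i,t)$ sufficient for the residual formula \eqref{Res_1+}. Finally, the node- and boundary-layer correctors $\{N_m,\Pi_m^{(i)}\}$ for $m\in\{\alpha-1,0,\alpha,1,\alpha+1\}$ are built by the same arguments as in the main text, with the exponential decay \eqref{rem_exp-decrease} preserved because all inhomogeneities are compactly supported in the longitudinal variable and vanish at $t=0$ together with their needed time derivatives.

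With $\mathfrak{U}_2^{(\varepsilon)}$ available, Step~1 of the proof of Theorem~\ref{Th_1} applies literally: every residual estimate in Section~\ref{justification} invokes only derivatives already bounded above, so $u_\varepsilon-\mathfrak{U}_2^{(\varepsilon)}$ satisfies a parabolic problem whose source terms are of size $\mathcal{O}(\varepsilon^\gamma)$ by the choice \eqref{gamma_cond} of $\gamma$, and the maximum principle of \cite[Lemma~5.1]{Mel-Roh_preprint-2022} yields $\|u_\varepsilon-\mathfrak{U}_2^{(\varepsilon)}\|_{C(\overline{\Omega}_\varepsilon\times[0,T])}\le C_2\varepsilon^\gamma$. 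The triangle inequality against $\mathfrak{U}_2^{(\varepsilon)}-\mathfrak{U}_1^{(\varepsilon)}=\mathcal{O}(\varepsilon^{\min\{1,\alpha+1\}})=\mathcal{O}(\varepsilon)=o(\varepsilon^\gamma)$, valid because $\gamma<1$, then finishes the proof of \eqref{main_U_1}. The most delicate point will be the bookkeeping of the second paragraph: confirming that the prescribed finite smoothness is indeed tight, that no hidden higher-order compatibility condition is secretly invoked in the recurrence, and that every derivative appearing in the residual formulas can be uniformly bounded by a constant independent of $\varepsilon$.
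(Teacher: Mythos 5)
Your proposal is correct and follows essentially the same route as the paper: the paper's own justification consists precisely of applying Theorem~\ref{Th_1} with $M=2$ (so that the exponent is $\gamma$ and $P=1$) and observing, as in the paragraph preceding the corollary, that constructing $\mathfrak{U}_{2}^{(\varepsilon)}$ only requires $w_{\alpha-1}^{(i)}\in C^3$ and $w_0^{(i)}, w_\alpha^{(i)}\in C^2$, which the stated $C^4$-regularity of $\{\varphi^{(i)}\}$, $q_1\in C^2$, and the additional matching conditions at $t=0$ guarantee. Your extra bookkeeping of the smoothness chain and the final triangle-inequality remark (already subsumed in the passage from \eqref{for_main_1} to \eqref{main_2} in the theorem's proof) are consistent with the text.
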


\begin{corollary}\label{Coro_3_3}
 Let $\alpha \in (0,1)$ and, in addition to the  assumptions made in Section~\ref{Sec:Statement}, the functions $\{\varphi^{(i)}\}_{i=1}^3$ belong to the smoothness class $C^5$ in their domains, $\varphi^{(0)}$ belongs to $C^3$ in $t\in [0,T],$ $q_1 \in C^3([0,T]),$ $v^{(i)}_i \in C^4\big([0,\ell_i]\big),$ $i\in \{1, 2, 3\},$  and
 $$
\partial_t \varphi^{(0)}\big|_{t=0}=\partial^2_{tt} \varphi^{(0)}\big|_{t=0}= \partial^3_{ttt} \varphi^{(0)}\big|_{t=0}=0, \quad q''_1(0)=q'''_1(0)=0.
$$

Then the inequality \eqref{main_U_2} hold.
\end{corollary}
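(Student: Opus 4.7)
My plan is to form the difference $U_\varepsilon := \mathfrak{U}_M^{(\varepsilon)} - u_\varepsilon$ and derive estimates from the fact that, by construction, $U_\varepsilon$ satisfies a parabolic initial–boundary value problem of the same structural type as \eqref{probl} but with small right-hand sides. Specifically, collecting the residual calculations \eqref{Res_1}, \eqref{Res_2}, \eqref{Res_3}, \eqref{Res_3+}, and \eqref{res_bc} together with the exponential smallness of $\widetilde N_\bullet$ and $\Pi_\bullet$ in the overlap zones, one obtains that inside each $\Omega_\varepsilon^{(i)}$ the difference satisfies a convection–diffusion equation whose source term is bounded in sup-norm by a constant times $\varepsilon^{\gamma(M+\lfloor\alpha\rfloor-1)}$, together with homogeneous initial data, zero Dirichlet data on $\Upsilon_\varepsilon^{(i)}(\ell_i)$, and Robin-type boundary data whose right-hand side is $\mathcal{O}(\varepsilon^{M+\lfloor\alpha\rfloor})+\mathcal{O}(\varepsilon^{\alpha+M})$ on the lateral surfaces.

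First I would apply the maximum principle for parabolic problems in thin graph-like junctions established in \cite[Lemma 5.1]{Mel-Roh_preprint-2022} to $U_\varepsilon$. The condition $M>\tfrac{3}{2}(1-\lfloor\alpha\rfloor)$ combined with $\gamma>\tfrac23$ ensures $\gamma(M+\lfloor\alpha\rfloor-1)>0$, so the estimate of the residuals propagates to a pointwise bound
\begin{equation*}
\|U_\varepsilon\|_{C(\overline{\Omega}_\varepsilon\times[0,T])}\le C_M\,\varepsilon^{\gamma(M+\lfloor\alpha\rfloor-1)}.
\end{equation*}
Since $\mathfrak{U}_M^{(\varepsilon)}-\mathfrak{U}_P^{(\varepsilon)}$ collects only terms with $\varepsilon$-exponent strictly greater than $\gamma(M+\lfloor\alpha\rfloor-1)$ (by the choice $P=\lfloor\gamma(M+\lfloor\alpha\rfloor-1)\rfloor+1-\lfloor\alpha\rfloor$), the triangle inequality upgrades the bound to the stated one involving $\mathfrak{U}_P^{(\varepsilon)}$. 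Applying the same scheme with $M$ replaced by any $M'\ge M$ and letting $M'\to\infty$ also verifies the asymptotic-expansion property in the sense of Definition~\ref{Puiseux} within the Banach space $C(\overline{\Omega}_\varepsilon\times[0,T])$.

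Next, for the gradient estimate I would test the equation for $U_\varepsilon$ against $U_\varepsilon$ itself and integrate over $\Omega_\varepsilon\times(0,\tau)$. The time derivative gives $\tfrac12\|U_\varepsilon(\cdot,\tau)\|_{L^2}^2\ge 0$, the diffusion term yields $\varepsilon\|\nabla_x U_\varepsilon\|_{L^2}^2$, and the boundary/right-hand side contributions are controlled using the sup-norm estimate just obtained plus the trivial volume bound $|\Omega_\varepsilon|=\mathcal{O}(\varepsilon^2)$ and surface bound $|\partial\Omega_\varepsilon|=\mathcal{O}(\varepsilon)$. The delicate point will be the convection term $\int U_\varepsilon\,\overrightarrow{V_\varepsilon}\!\cdot\!\nabla_x U_\varepsilon$: in the node $\Omega_\varepsilon^{(0)}$ the field $\overrightarrow{V_\varepsilon}^{(0)}$ is incompressible so integration by parts kills the volume integral up to boundary pieces on $\Upsilon_\varepsilon^{(i)}(\varepsilon\ell_0)$, while in each cylinder $\Omega_\varepsilon^{(i)}$ the structure \eqref{str_1}--\eqref{st_3} gives $\mathrm{div}_x\overrightarrow{V_\varepsilon}^{(i)}=\partial_{x_i}v_i^{(i)}+\varepsilon\cdot\mathrm{div}_{\bar x_i}\overline V^{(i)}$, so integration by parts produces a lower-order term $\tfrac12\int U_\varepsilon^2\,\partial_{x_i}v_i^{(i)}$ plus an $\mathcal O(\varepsilon)$ contribution, exactly as in the display chain \eqref{incom}. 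Using the zero Dirichlet trace of $U_\varepsilon$ on $\{\Upsilon_\varepsilon^{(i)}(\ell_i)\}$ together with the Poincaré-type inequality $\|u\|_{L^2(\Omega_\varepsilon)}\le C\|\nabla_x u\|_{L^2(\Omega_\varepsilon)}$ recalled in \cite[Sec.~2]{M-AA-2021}, I can absorb $\tfrac12\|\nabla_x U_\varepsilon\|^2_{L^2}$ to the left-hand side and conclude
\begin{equation*}
\|\nabla_x U_\varepsilon\|_{L^2(\Omega_\varepsilon\times(0,T))}\le C_M\,\varepsilon^{\gamma(M+\lfloor\alpha\rfloor-1)+\frac12}.
\end{equation*}
Dividing by $\sqrt{|\Omega_\varepsilon|}=\mathcal O(\varepsilon)$ then produces the rescaled bound \eqref{main_2+}, and passing from $\mathfrak{U}_M$ to $\mathfrak{U}_{P+1}$ again costs only higher-order terms.

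I expect the principal obstacle to be the bookkeeping in the energy estimate: one has to confirm that the $\varepsilon^{-1}$ factor in front of the convection term is tamed by the combination of incompressibility in the node and the smallness of the transverse components of $\overrightarrow V_\varepsilon^{(i)}$ in the cylinders, and simultaneously that the boundary residuals on $\Gamma_\varepsilon^{(i)}$ (which carry an $\varepsilon^{M+\lfloor\alpha\rfloor}$ prefactor but are integrated over a surface of area $\mathcal O(\varepsilon)$) balance against the $\varepsilon$ in front of the diffusive term to still beat $\varepsilon^{2\gamma(M+\lfloor\alpha\rfloor-1)+1}$. Everything else—verifying the regularity hypotheses through Remark~\ref{additional}, re-expressing $\mathfrak{U}_P$ from $\mathfrak{U}_M$, and applying Definition~\ref{Puiseux}—is routine once these two norms are in hand.
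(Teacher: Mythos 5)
Your proposal reconstructs the proof of Theorem~\ref{Th_1} --- the maximum-principle bound from \cite[Lemma 5.1]{Mel-Roh_preprint-2022} followed by the energy identity tamed by the incompressibility/divergence structure as in \eqref{incom} --- and that machinery is sound and coincides with the paper's route to the theorem. But it does not yet prove Corollary~\ref{Coro_3_3}. The corollary is not a mere restatement of \eqref{main_2+}: its content is that the estimate \eqref{main_U_2} (i.e.\ \eqref{main_2+} specialized to $\alpha\in(0,1)$, where $\lfloor\alpha\rfloor=0$, $P=\lfloor\gamma\rfloor+1=1$ and hence $\mathfrak{U}_{P+1}^{(\varepsilon)}=\mathfrak{U}_{2}^{(\varepsilon)}$) survives when the $C^\infty$ data and the infinitely many matching conditions of Remark~\ref{additional} --- which Theorem~\ref{Th_1} explicitly assumes --- are replaced by the finite hypotheses $\varphi^{(i)}\in C^5$, $\varphi^{(0)}\in C^3$ in $t$, $q_1\in C^3$, $v^{(i)}_i\in C^4$, together with compatibility at $t=0$ up to third order. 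Your closing sentence defers exactly this step to ``verifying the regularity hypotheses through Remark~\ref{additional}''; that remark is not available under the corollary's hypotheses, so invoking it is circular, and simply citing the theorem is not legitimate either since its hypotheses are strictly stronger.

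What is missing is the finite bookkeeping that the paper performs in lieu of a formal proof: one must construct only the coefficients entering $\mathfrak{U}_{3}^{(\varepsilon)}$ (the paper notes this partial sum is the one needed for \eqref{main_U_2}) and check that each step of the recurrences is admissible with the stated regularity. Concretely, every iteration of \eqref{lim_2}/\eqref{limit_al+k} consumes two $x_i$-derivatives of the previous graph coefficient, which is why $w^{(i)}_{\alpha-1}$ must be $C^5$-smooth in $x_i$ (hence $\varphi^{(i)}\in C^5$, $v^{(i)}_i\in C^4$); and the classical solvability with zero initial data of the successive hyperbolic problems, of the node problems for $\widetilde N_{p\alpha+k}$, and of the boundary-layer problems \eqref{prim+probl+k} requires $\partial_t^{j}w^{(i)}_{\bullet}(0,0)=0$ and $\boldsymbol{d_{\bullet}}(0)=\boldsymbol{d_{\bullet}}'(0)=0$ up to the relevant order, which is precisely what $q''_1(0)=q'''_1(0)=0$ and $\partial^j_t\varphi^{(0)}\big|_{t=0}=0$, $j\le 3$, guarantee. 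Only after this verification are the residual bounds \eqref{Res_2-} and \eqref{Res_5} --- and therefore both norms in your argument --- justified, at which point your two-step scheme with $M$ fixed (not sent to infinity) yields \eqref{main_U_2}.
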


\section{Concluding remarks}

{\bf 1.} We have studied the influence of large boundary interactions $(\alpha < 1)$ on the asymptotic behaviour of the  solution $u_\varepsilon$  to the problem~\eqref{probl}. The constructed asymptotic expansion has revealed the dependence of the solution on the parameters $\varepsilon$ and $\alpha$ and other parameters of the problem (the geometric structure of the thin junction, including the local geometric irregularity of the node, through the constants $\{h_i\}$ and the values $\{\boldsymbol{d_{p\alpha +k}}(t)\}$ in the relations \eqref{trans*k-p*alpfa} and \eqref{const_d*k-p*alpfa}).
The principal  part of the Puiseux asymptotic expansion~\eqref{asymp_expansion} shows that the physical processes on the lateral surfaces of thin cylinders do cause cardinal changes in the global behavior of the solution (it becomes larger as the parameter $\varepsilon$ decreases).
From a physical point of view, it is advisable to consider the parameter $\alpha$ from the interval $(0,1),$  since for smaller values of the parameter $\alpha$ the solution becomes too large,  indicating the instability of the transport process in this case.

The approximation $\mathfrak{U}_{1}^{(\varepsilon)}$ constructed for the case $\alpha \in (0,1)$ indicates that
\begin{itemize}
  \item
  the principal coefficients  $\{\varepsilon^{\alpha -1} w_{\alpha -1}^{(i)}\}_{i=1}^3$ are directly affected by the boundary interactions $\{\varphi^{(i)}\}_{i=1}^3$ on the lateral surfaces of the thin cylinders through the solutions $\{u_{\alpha}^{(i)}\}_{i=1}^3,$ respectively;
  \item
  the coefficients $\{w_{0}^{(i)}\}_{i=1}^3$ take into account the inhomogeneous Dirichlet conditions on the bases of the thin cylinders;
  \item
  and only the coefficients  $\{\varepsilon^{\alpha} w_{\alpha}^{(i)}\}_{i=1}^3$ begin to feel the influence of the node boundary condition
  and physical processes inside the node through the value $\boldsymbol{d_{\alpha}}(t)$ (see \eqref{const_d_alpfa}), which depends  on both the interaction $\varphi^{(0)}$ on the node boundary and on the solution $N_{\alpha-1}.$
\end{itemize}

The node-layer solutions $N_{\alpha-1},$ $N_0,$ and $N_{\alpha}$ ensure the smoothness of the approximation $\mathfrak{U}_{1}^{(\varepsilon)}$ at the node, while the boundary-layer solutions $\Pi^{(i)}_{\alpha-1},$ $\Pi^{(i)}_0,$ and $\Pi^{(i)}_{\alpha}$ ensure the fulfillment of the boundary condition at the base of the thin cylinder $\Omega^{(i)}_{\varepsilon},$  $i\in \{2, 3\}.$

The estimates \eqref{main_2+} and \eqref{main_2}, proved in Theorem \ref{Th_1}, allow us to construct approximations of the solution with a given accuracy with respect to the small parameter $\varepsilon,$ which indicates the efficiency and usefulness of the proposed asymptotic approach.

\medskip

{\bf 2.} \ In the case when different intensities of boundary processes are observed in different parts of the boundary of a thin network, it is necessary to consider the corresponding intensity parameters. For example, $\alpha_1, \alpha_2, \alpha_3$ respectively  for the lateral surfaces of the thin cylinders $\Omega_\varepsilon^{(1)}, \Omega_\varepsilon^{(2)}, \Omega_\varepsilon^{(3)},$   and $\alpha_0$ for the node boundary. Then, for instance, the regular part of the asymptotics in  each thin cylinder $\Omega^{(i)}_\varepsilon$ will have the form
  \begin{equation}\label{new-form}
 \sum_{i=0}^{3}\sum\limits_{k=0}^{+\infty}  \varepsilon^{\alpha_i +k -1 } \, \left(w_{\alpha_i +k -1}^{(i)} (x_i) + u_{\alpha_i +k -1}^{(i)} \big( x_i, \tfrac{\overline{x}_i}{\varepsilon}, t \big) \right)
+
  \sum\limits_{k=0}^{+\infty}  \varepsilon^{k} \, \left(w_{k}^{(i)} (x_i) + u_{k}^{(i)} \big( x_i, \tfrac{\overline{x}_i}{\varepsilon}, t \big) \right).
\end{equation}

The  ansatz \eqref{new-form} shows that if $\alpha_0$ is less than $\alpha_1, \alpha_2, \alpha_3$, then activities at the node boundary can cause crucial changes in the entire transport process in a thin network.

\section*{Acknowledgments}
The authors  thank  for  funding by the  Deutsche Forschungsgemeinschaft (DFG, German Research Foundation) – Project Number 327154368 – SFB 1313.



\end{document}